\crefname{ALC@unique}{Step}{Steps}
\crefname{problem}{Problem}{Problems}
\crefname{assumption}{Assumption}{Assumptions}
\title{Extension of Controllability Score to\\ Infinite-Dimensional Systems\thanks{Submitted to the editors DATE.
\funding{This work was supported by Japan Society for the Promotion of Science KAKENHI under 23K03899.}}}
\author{Yuito Nakabe\thanks{Department of Mathematical Informatics, Graduate School of Information Science and Technology, The University of Tokyo (Y.~Nakabe: \email{nakabe-yuito1111@g.ecc.u-tokyo.ac.jp}, K.~Sato: \email{kazuhiro@mist.i.u-tokyo.ac.jp}).}
\and Kazuhiro Sato \footnotemark[2]}
\DeclareMathOperator{\diag}{diag}
\DeclareMathOperator{\Span}{span}
\DeclareMathOperator{\tr}{tr}
\DeclareMathOperator{\dom}{dom}
\DeclareMathOperator{\ran}{ran}
\begin{document}

\maketitle
% REQUIRED
\begin{abstract}
Centrality analysis in dynamical network systems is essential for understanding system behavior. In finite-dimensional settings, controllability scores---namely, the Volumetric Controllability Score (VCS) and the Average Energy Controllability Score (AECS)---are defined as the unique solutions of specific optimization problems. In this work, we extend these concepts to infinite-dimensional systems by formulating analogous optimization problems. 
Moreover, we prove that these optimization problems have optimal solutions under weak assumptions, and that both VCS and AECS remain unique in the infinite-dimensional context under appropriate assumptions.
The uniqueness of the controllability scores is essential to use them as a centrality measure, since it not only reflects the importance of each state in the dynamical network but also provides a consistent basis for interpretation and comparison across different researchers.
Finally, we illustrate the behavior of VCS and AECS with a numerical experiment based on the heat equation.
\end{abstract}

% REQUIRED
\begin{keywords}
  controllability, centrality, infinite-dimensional system
\end{keywords}

% REQUIRED
\begin{AMS}
  	93A15, 93B05, 93C05
\end{AMS}

\section{Introduction}
\label{sec: introduction}

\subsection{Background}
\label{subsec:background}

Finite-dimensional systems are often regarded as networks with a finite number of nodes. In this context, selecting appropriate control nodes is crucial for maximizing system performance. Existing approaches for such systems can be broadly classified into quantitative methods (see, for example, \cite{ClarkAlomairBushnellPoovendran2017submodularity, pasqualetti2014controllability, RomaoMargellosPapachristodoulou2018, SatoTakeda2020, SummersCortesiLygeros2016})
and qualitative methods (see, for example,\cite{ClarkAlomairBushnellPoovendran2017selection, Olshevsky2014, PequitoKarAguiar2016, satoterasakigraph2024}),
both of which are useful for selecting control nodes. 
In particular, controllability scores---originally introduced as a qualitative centrality measure \cite{satoterasaki2024} and successfully applied in brain network analysis \cite{SatoKawamura2024}---are defined in two forms: the Volumetric Controllability Score (VCS) and the Average Energy Controllability Score (AECS). VCS assesses controllability by measuring the volume of the set of reachable states, while AECS evaluates it based on the average energy required for control. Both scores are derived from optimization problems that quantify the system’s controllability through the spectral properties of the controllability Gramian. In this setting, each component of the state is interpreted as a node, and the corresponding centrality reflects the importance of these nodes. The underlying idea is that the significance of a node can be quantified by determining the weight of a virtual input that maximizes system controllability---a concept implemented by introducing a diagonal input matrix and formulating an associated optimization problem.

Many practical systems, however, are governed by partial differential equations (PDEs) that model spatially distributed phenomena, resulting in an inherently infinite-dimensional structure. For instance, in beam control \cite{EndoSasakiMatsuno2017}, PDEs describe the propagation of vibrations along a continuous structure, while in epidemic modeling \cite{KuniyaJinliangHisashi2016}, they capture the spatial dynamics of disease spread. Because these systems are naturally formulated in infinite-dimensional spaces, traditional centrality measures based on finite-dimensional assumptions are inadequate. This gap highlights the need for novel centrality measures that are specifically designed to assess the influence of system components in infinite-dimensional settings, thereby extending the applicability of controllability-based approaches to a broader class of real-world systems.

\subsection{Contribution}
\label{subsec:contribution}
Based on this background, we extend the concept of controllability scores---namely, the Volumetric Controllability Score (VCS) and the Average Energy Controllability Score (AECS)---to infinite-dimensional systems by formulating them as the unique solutions of specific optimization problems. Although these scores are defined as the optimal solutions of the corresponding problems, their existence and uniqueness are not immediately evident. 
Since both properties are critical for employing controllability scores as reliable centrality measures, we rigorously prove their existence under a set of weak assumptions and establish their uniqueness by incorporating additional appropriate assumptions.
This extension broadens the applicability of controllability-based centrality measures and provides a rigorous framework for assessing node importance even in systems where discrete node interpretations are not straightforward. 
Additionally, we demonstrate the practical applicability of our approach through a numerical experiment on the heat equation. In this experiment, all the required assumptions are satisfied, thereby ensuring the existence and uniqueness of the score and validating our theoretical results. Furthermore, the results showed that VCS tended to evaluate each state to the same degree, while AECS tended to evaluate differences in importance between different states.

\subsection{Outline}
\label{subsec:outline}

The remainder of this paper is organized as follows. In Section 2, we introduce the controllability scores for finite-dimensional systems and discuss key properties that are essential for extending the concept to infinite-dimensional settings and for computing eigenvalues of operators. In Section 3, we extend these controllability scores to infinite-dimensional systems and prove their uniqueness under certain assumptions; we also demonstrate that the scores assume a specific value under a special case. In Section 4, we present a numerical experiment to illustrate the application of the controllability scores. Finally, Section 5 offers concluding remarks.

%%%%%%%%%%%%%%%%%%%%%%%%%%%%%%%%%%%%%%%%%%%%%%%%%%%%%%%%%%%%%%%%%%%%%%%%%%%
\section{Preliminaries}
\label{sec:preliminaries}

\subsection{Notation}
\label{subsec:notation}
The set of all positive integers and the set of all real numbers are denoted by $\mathbb{N}$ and $\mathbb{R}$, respectively.
Let $\mathcal{H}$ denote a separable real Hilbert space, where $\|\cdot\|$ and $\langle \cdot,\cdot\rangle$ are its norm and its inner product, respectively.
Let $\{e_i\}_{i=1}^\infty$ denote a complete orthonormal system of $\mathcal{H}$.
Let $P_i$ denote the projection to $\Span\{e_i\}$, defined by $P_i z \coloneq \langle z,e_i\rangle e_i$.
The symbol $L^2([0,T];\mathcal{H})$ denotes the set $L^2([0,T];\mathcal{H}) = \{u:[0,T]\to\mathcal{H} \mid \int_0^T\|u(t)\|^2 \mathrm{d}t <\infty\}$.
For $S\subset \mathcal{H}$, $\Span S$ denotes the linear hull of $S$.
For a linear subspace $V\subset\mathcal{H}$, $V^\bot = \{ w\in \mathcal{H}\mid \langle w,v\rangle = 0 \textrm{ for any $v\in V$}\}$ denotes its orthogonal complement.

For a linear operator $A$, $\dom(A)$ and $\ran(A)$ denotes the domain and range of $A$ respectively.
and $A^*$ denotes the conjugate of $A$.
Moreover, for a linear operator $A$, $\exp(tA)$ denotes the strongly continuous semigroup generated by $A$.

For a matrix $A\in\mathbb{R}^{n\times n}$, $\det A$ and $\tr A$ denote the determinant of $A$ and the trace of $A$, respectively.

\subsection{Controllability score for finite-dimensional systems}
\label{subsec:finite_controllability_score}

In this section, we summarize the idea of the controllability score introduced in \cite{satoterasaki2024}.

For the purposes of this section, we consider the system 
\begin{equation}\label{eq:state_equation_finite_dimension_no_input}
    \dot{x}(t) = Ax(t),
\end{equation}
where $x(t)\in \mathbb{R}^n$ is a state vector whose components represent nodes, and $A\in\mathbb{R}^{n\times n}$ is a matrix representing a weighted network structure.
Our goal is to evaluate the quantitative importance of these nodes. To this end, we augment to system \eqref{eq:state_equation_finite_dimension_no_input} with a virtual weighted input:
\begin{equation}\label{eq:input_network}
    \dot{x}(t) = Ax(t) + Bu(t),
\end{equation} 
where $B \coloneq \diag(\sqrt{p_1},\sqrt{p_2},\dots,\sqrt{p_n})$ is a diagonal matrix encoding the weights of the nodes, and $u(t)\in\mathbb{R}^n$ represents the virtual input.

We develop quantitative measures that capture how easily the system can be controlled.
The first measure is based on the volume of the reachable space, and
the second is based on the energy required to reach the unit sphere.
These measures depend on the weights $p_1,p_2,\dots,p_n$.
Using the measures, we can formulate the optimization problem to maximize the controllability. 
The optimal solutions can be interpreted as measures of node importance, owing to the one-to-one correspondence between the state and input nodes.
However, without any restrictions on the weights, the optimal solution may become unbounded. To prevent this, we impose the constraint $\sum_{i=1}^n p_i = 1$.

To introduce the quantitative measures, we first define the finite-time controllability Gramian for system \eqref{eq:input_network} as 
\begin{equation}
    W(p,T) = \int_0^T \exp(tA)\diag(p_1,p_2,\dots,p_n)\exp(tA^\top)\mathrm{d}t.
\end{equation}
Moreover, if $A$ is stable, we can consider the controllability Gramian of $T\to\infty$ defined as 
\begin{equation}
    W(p) = \int_0^\infty \exp(tA)\diag(p_1,p_2,\dots,p_n)\exp(tA^\top)\mathrm{d}t.
\end{equation}
By definition, the controllability Gramian is positive semidefinite.
In particular, it is known the system is controllable if and only if  the controllability Gramian is positive definite.
For the remainder of this section, we assume $A$ is stable.
\begin{lemma}[\cite{zabczyk2020mathematical}, Proposition 1.1.]\label{lem:minimum_energy_to_fixed_point_finite_dimension}
    If system \eqref{eq:input_network} is controllable, 
    \begin{equation}
        \min \left\{\left.\int_0^T \|u(t)\|^2 \mathrm{d}t \right\vert x(0) = 0, x(T) = x_f, u\in L^2([0,T];\mathbb{R}^n) \right\}
        = x_f^\top W(p,T)^{-1}x_f,
    \end{equation}
    where $x_f$ is any point in $\mathbb{R}^n$.
\end{lemma}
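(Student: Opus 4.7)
The plan is to recognize this as a minimum-norm problem in the Hilbert space $L^2([0,T];\mathbb{R}^n)$ and solve it using the adjoint of the input-to-state map. Since the state at time $T$ under zero initial condition is $x(T) = \int_0^T \exp((T-t)A) B u(t)\,\mathrm{d}t$, I would introduce the bounded linear operator $L : L^2([0,T];\mathbb{R}^n) \to \mathbb{R}^n$ defined by $Lu = \int_0^T \exp((T-t)A) B u(t)\,\mathrm{d}t$. Then the terminal constraint $x(T) = x_f$ becomes $Lu = x_f$, and the lemma becomes the statement that the minimum of $\|u\|_{L^2}^2$ over the affine set $L^{-1}(\{x_f\})$ equals $x_f^\top W(p,T)^{-1} x_f$.

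The central identification is that $L L^* = W(p,T)$. A routine calculation with the $L^2$ inner product gives the Hilbert-space adjoint $(L^* y)(t) = B^\top \exp((T-t)A^\top) y$, and composing one has $L L^* y = \int_0^T \exp((T-t)A) B B^\top \exp((T-t)A^\top) y\,\mathrm{d}t$, which after the substitution $s = T-t$ and using $B B^\top = \diag(p_1,\ldots,p_n)$ collapses to $W(p,T) y$. Controllability implies $W(p,T)$ is positive definite and hence invertible, so $L$ is surjective. I would then propose the candidate $u^* = L^* W(p,T)^{-1} x_f$, verify $L u^* = (L L^*) W(p,T)^{-1} x_f = x_f$, and compute
\begin{equation*}
\|u^*\|_{L^2}^2 = \langle L^* W(p,T)^{-1} x_f,\, L^* W(p,T)^{-1} x_f\rangle_{L^2} = x_f^\top W(p,T)^{-1} (L L^*) W(p,T)^{-1} x_f = x_f^\top W(p,T)^{-1} x_f.
\end{equation*}

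To conclude optimality, I would invoke the orthogonal decomposition $L^2([0,T];\mathbb{R}^n) = \ker L \oplus \overline{\ran L^*}$. Any other admissible control decomposes as $u = u^* + v$ with $v \in \ker L$, and because $u^* \in \ran L^*$ lies in the orthogonal complement of $\ker L$, Pythagoras yields $\|u\|_{L^2}^2 = \|u^*\|_{L^2}^2 + \|v\|_{L^2}^2 \ge \|u^*\|_{L^2}^2$, with equality precisely when $v = 0$. The main conceptual step — really the only one beyond direct computation — is the geometric observation that $u^*$ belongs to $(\ker L)^\perp$; this is immediate from its definition as an element of $\ran L^*$, but it is the hinge on which the lower bound depends. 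Everything else is bookkeeping of adjoints and the substitution $s = T-t$ in the Gramian integral.
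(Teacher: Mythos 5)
Your proof is correct. The paper itself only cites this lemma (to Zabczyk) without proof, but your argument --- candidate control $u^* = L^* W(p,T)^{-1}x_f$, the identity $LL^* = W(p,T)$, and optimality via orthogonality of $\ran L^*$ to $\ker L$ --- is essentially the same argument the paper gives for its infinite-dimensional analogue in Proposition~\ref{prop:AECS_reachable_energy_span}, where the orthogonality step is packaged through the projection $P(p,T) = L(p,T)^* W_n^\dagger(p,T) L(p,T)$ instead of the explicit decomposition $L^2 = \ker L \oplus \overline{\ran L^*}$.
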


The next Lemma is known as the direct consequence of Lemma \ref{lem:minimum_energy_to_fixed_point_finite_dimension}.
\begin{lemma} \label{Lem_reachable_ellipsoid}
    If system \eqref{eq:input_network} is controllable, 
    \begin{equation}
        \mathcal{E}(T) \coloneq
        \left\{
            x(T)\in\mathbb{R}^n \middle|
            \begin{aligned}
                u\in L^2[0,T], \int_0^T \|u(t)\|^2 \mathrm{d}t \leq 1\\
                \dot{x}(t) = Ax(t)+Bu(t), x(0)=0
            \end{aligned}
        \right\}
        = \{ x\in\mathbb{R}^n \mid x^\top W(p,T)^{-1}x \leq 1\}.
    \end{equation}
    Moreover, the volume of $\mathcal{E}(T)$ is $V_n (\det W(p,T))^{1/2}$, where $V_n$ is the volume of the unit ball in $n$ dimensional space, as shown in \cite[Corollary 12.15]{vishnoi2021algorithms}.
\end{lemma}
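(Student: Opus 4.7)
The plan is to derive both the set equality and the volume formula as direct consequences of Lemma \ref{lem:minimum_energy_to_fixed_point_finite_dimension}, noting first that controllability of \eqref{eq:input_network} guarantees that $W(p,T)$ is positive definite and therefore invertible.

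For the set equality, I would argue by double inclusion. For the ``$\subseteq$'' direction, suppose $x_f \in \mathcal{E}(T)$. Then there is a control $u \in L^2([0,T];\mathbb{R}^n)$ with $\int_0^T \|u(t)\|^2 \mathrm{d}t \leq 1$ steering the state from $0$ to $x_f$. Since $x_f^\top W(p,T)^{-1} x_f$ is the infimum of $\int_0^T \|u(t)\|^2 \mathrm{d}t$ over all such admissible controls by Lemma \ref{lem:minimum_energy_to_fixed_point_finite_dimension}, we obtain $x_f^\top W(p,T)^{-1} x_f \leq 1$. For the reverse inclusion, given $x_f$ with $x_f^\top W(p,T)^{-1} x_f \leq 1$, the energy-optimal control provided by Lemma \ref{lem:minimum_energy_to_fixed_point_finite_dimension} attains exactly $\int_0^T \|u(t)\|^2 \mathrm{d}t = x_f^\top W(p,T)^{-1} x_f \leq 1$ and drives $0$ to $x_f$, placing $x_f$ in $\mathcal{E}(T)$.

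For the volume statement, I would invoke the standard formula for the volume of an ellipsoid: for any symmetric positive definite matrix $M \in \mathbb{R}^{n \times n}$, the set $\{x \in \mathbb{R}^n \mid x^\top M x \leq 1\}$ has Lebesgue measure $V_n (\det M)^{-1/2}$, which follows by a change of variables via $M^{1/2}$. Applying this with $M = W(p,T)^{-1}$ and using $\det W(p,T)^{-1} = 1/\det W(p,T)$ yields the desired volume $V_n (\det W(p,T))^{1/2}$.

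There is no substantive obstacle here; the lemma is essentially a reformulation of Lemma \ref{lem:minimum_energy_to_fixed_point_finite_dimension}. The only point worth being explicit about is why $W(p,T)$ is invertible, which is exactly the content of the controllability assumption on \eqref{eq:input_network}, so that both $W(p,T)^{-1}$ and its determinant make sense.
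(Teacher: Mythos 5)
Your proof is correct and follows exactly the route the paper intends: the paper gives no explicit proof, stating the lemma as a direct consequence of Lemma \ref{lem:minimum_energy_to_fixed_point_finite_dimension} together with the standard ellipsoid volume formula cited from \cite[Corollary 12.15]{vishnoi2021algorithms}, which is precisely the double-inclusion argument and change-of-variables computation you spell out. No gaps; the only detail worth noting is that Lemma \ref{lem:minimum_energy_to_fixed_point_finite_dimension} asserts the minimum is attained, which is what licenses your reverse inclusion.
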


Lemma \ref{Lem_reachable_ellipsoid} establishes that if system \eqref{eq:input_network} is controllable, then the reachable set $\mathcal{E}(T)$---the set of all states reachable at time $T$ with an input of bounded energy---is an ellipsoid. 
A key factor in determining the volume of $\mathcal{E}(T)$ is $\det W(p,T)$; 
a larger value of $\det W(p,T)$ indicates that $\mathcal{E}(T)$ is larger and, consequently, that the system is easier to control.
Furthermore, by considering the limit as $T\rightarrow \infty$, we can also regard $\det W(p)$ as a candidate for a centrality measure.

Based on this idea, we construct the following minimization problem, whose optimal solution is defined as the Volumetric Controllability Score (VCS).

\begin{framed}
\begin{align}
    \begin{aligned}
        &&& \text{minimize} && f(p) := -\log\det W(p) \\
        &&& \text{subject to} && p \in X\cap \Delta.
    \end{aligned}
\end{align}
\end{framed}
\noindent
Here,
\begin{equation}\label{eq:def_X}
    X \coloneq \left\{ p\in\mathbb{R}^n \mid W(p) \succ O\right\},
\end{equation}
\begin{equation}
    \Delta \coloneq \left\{p\in\mathbb{R}^n \left\vert \sum_{i=1}^n p_i = 1, p_i\geq 0\right.\right\}.
\end{equation}
Note that $p\in X$ is equivalent to system (\ref{eq:input_network}) being controllable.
This is necessary to define $f(p)$.

If system (\ref{eq:input_network}) is controllable, the objective function $f(p)$ can be written by using eigenvalues of $W(p)$.
\begin{lemma}
    Let $\lambda_1,\lambda_2,\dots,\lambda_n$ be the eigenvalues of $W(p)$.
    If $p\in X$, $W(p)$ is positive definite, and $\lambda_1,\lambda_2,\dots,\lambda_n >0$.
    Moreover, $\det W(p) = \lambda_1 \lambda_2\cdots\lambda_n$.
    Therefore, 
    \begin{equation}
        \begin{split}
            f(p) = -\log \prod_{k=1}^n \lambda_k.
        \end{split}
    \end{equation}
\end{lemma}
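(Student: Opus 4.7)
The statement is a routine consequence of the definition of $X$ together with standard facts from linear algebra, so my plan is to unpack each clause in order.

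First, I would observe that $W(p)$ is by construction a symmetric matrix, since it is the integral of $\exp(tA)\diag(p_1,\dots,p_n)\exp(tA^\top)$, and each integrand is symmetric. By the definition \eqref{eq:def_X} of $X$, the hypothesis $p\in X$ means $W(p)\succ O$, which is the first assertion. I would then invoke the standard spectral theorem for real symmetric matrices: $W(p)$ admits an orthogonal diagonalization $W(p)=U\diag(\lambda_1,\dots,\lambda_n)U^\top$ with $U$ orthogonal and $\lambda_i\in\mathbb{R}$. Positive definiteness is equivalent to all eigenvalues being strictly positive, which gives $\lambda_1,\dots,\lambda_n>0$.

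Next, taking determinants in the diagonalization and using $\det U = \pm 1$, I would deduce
\begin{equation}
    \det W(p) = \det U\cdot \lambda_1\lambda_2\cdots\lambda_n\cdot \det U^\top = \lambda_1\lambda_2\cdots\lambda_n.
\end{equation}
In particular, this product is strictly positive, so $\log\det W(p)$ is well defined. Substituting into $f(p)=-\log\det W(p)$ yields
\begin{equation}
    f(p) = -\log\prod_{k=1}^n\lambda_k,
\end{equation}
completing the proof.

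There is no real obstacle here: the lemma is essentially a bookkeeping step that rewrites $f(p)$ in terms of spectral data, which will be useful when extending the objective function to the infinite-dimensional setting (where one must work with eigenvalues rather than determinants). The only point that might deserve a brief sentence is the symmetry of $W(p)$, since this is what permits the use of the spectral theorem and guarantees that the $\lambda_i$ are real to begin with.
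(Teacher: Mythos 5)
Your proof is correct; the paper states this lemma without proof as a standard consequence of the spectral theorem for the symmetric positive-definite matrix $W(p)$, and your argument (symmetry of the integrand, the definition of $X$ as $W(p)\succ O$, orthogonal diagonalization, and $\det U=\pm1$) is exactly the standard justification the paper implicitly relies on. No gaps.
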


Next, we construct the second score, which is based on the average energy required for control. 
Lemma \ref{lem:minimum_energy_to_fixed_point_finite_dimension} suggests that by considering the average energy needed to reach the unit sphere, we obtain a measure that reflects the system's energy controllability. 
The following lemma presents a key fact used to derive the measure, as explained in \cite{Olshevsky2014}.

\begin{lemma}\label{lem:average_energy}
    Suppose that $x_f$ is uniformly distributed on the unit sphere. Then,
    \begin{equation}
        \mathrm{E}_{x_f}[x_f^\top W(p,T)^{-1}x_f] = \frac{1}{n}\tr (W(p,T)^{-1})
    \end{equation}
    for any $p\in X$.
\end{lemma}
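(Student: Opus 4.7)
The plan is to exploit the rotational invariance of the uniform distribution on the unit sphere, which reduces the computation to evaluating $\mathrm{E}[x_f x_f^\top]$.

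First I would rewrite the quadratic form using the trace identity
\[
x_f^\top W(p,T)^{-1} x_f = \tr\bigl(W(p,T)^{-1} x_f x_f^\top\bigr),
\]
and then interchange expectation and trace (which is valid since everything is finite-dimensional) to obtain
\[
\mathrm{E}_{x_f}\bigl[x_f^\top W(p,T)^{-1} x_f\bigr] = \tr\bigl(W(p,T)^{-1}\, \mathrm{E}_{x_f}[x_f x_f^\top]\bigr).
\]
So the whole lemma reduces to computing the matrix $M := \mathrm{E}_{x_f}[x_f x_f^\top]$.

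Next I would argue that $M = \tfrac{1}{n} I_n$. For any orthogonal matrix $Q \in \mathbb{R}^{n \times n}$, the uniform distribution on the unit sphere is invariant under $x_f \mapsto Q x_f$, so
\[
Q M Q^\top = \mathrm{E}_{x_f}[(Q x_f)(Q x_f)^\top] = \mathrm{E}_{x_f}[x_f x_f^\top] = M.
\]
A symmetric matrix that commutes (in this conjugation sense) with every orthogonal matrix must be a scalar multiple of the identity, so $M = c I_n$ for some $c \in \mathbb{R}$. Taking the trace gives $nc = \mathrm{E}_{x_f}[\tr(x_f x_f^\top)] = \mathrm{E}_{x_f}[\|x_f\|^2] = 1$, so $c = 1/n$.

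Combining the two steps yields $\mathrm{E}_{x_f}[x_f^\top W(p,T)^{-1} x_f] = \tfrac{1}{n}\tr(W(p,T)^{-1})$, as claimed. There is no real obstacle here; the only subtlety is justifying $M = \tfrac{1}{n} I_n$, which one can alternatively do by diagonalizing $W(p,T)^{-1}$ and using the symmetry $\mathrm{E}[(x_f)_i^2] = 1/n$ directly, but the orthogonal-invariance argument is cleaner.
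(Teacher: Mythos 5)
Your proof is correct: the identity $x_f^\top W(p,T)^{-1}x_f = \tr\bigl(W(p,T)^{-1}x_fx_f^\top\bigr)$, linearity of expectation, and the rotational-invariance argument showing $\mathrm{E}[x_fx_f^\top]=\tfrac{1}{n}I_n$ together give exactly the claimed formula. The paper itself offers no proof of this lemma --- it is stated as a known fact with a citation to Olshevsky --- so there is no in-paper argument to compare against; your derivation is the standard one and would serve as a complete, self-contained justification.
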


Lemmas \ref{lem:minimum_energy_to_fixed_point_finite_dimension} and \ref{lem:average_energy} show that when the target state
$x_f$ is uniformly distributed on the unit sphere, the average minimal energy required to steer the system from the origin to
$x_f$ can be expressed in a simple form involving the controllability Gramian.
A key factor for calculating the average minimal energy is $\tr (W(p,T)^{-1})$; a lower value indicates that the system is easier to control over the unit sphere. Moreover, by considering the limit as $T\to\infty$, we can also regard $\tr (W(p)^{-1})$ as a candidate for a centrality measure.

Based on this, we can formulate a minimization problem.
The optimal solution to this problem is called AECS (Average Energy Controllability Score).
\begin{framed}
    \begin{align}
        \begin{aligned}
            &&& \text{minimize} && g(p) := \tr(W(p)^{-1}) \\
            &&& \text{subject to} && p \in X\cap \Delta.
        \end{aligned}
    \end{align}
\end{framed}

The function $g(p)$ is written with eigenvalues of $W(p)$.
\begin{lemma}
    Let $\lambda_1,\lambda_2,\dots,\lambda_n$ be the eigenvalues of $W(p)$.
    If $p\in X$, $W(p)$ is positive definite, and $\lambda_1,\lambda_2,\dots,\lambda_n >0$.
    Moreover, the eigenvalues of $W(p)^{-1}$ are $\frac{1}{\lambda_1}, \frac{1}{\lambda_2}, \dots, \frac{1}{\lambda_n}$.
    Therefore,
    \begin{equation}
        \begin{split}
            g(p) = \sum_{k=1}^n \frac{1}{\lambda_k}.
        \end{split}
    \end{equation}
\end{lemma}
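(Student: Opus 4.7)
The plan is to chain together three elementary linear-algebra facts about the positive-definite matrix $W(p)$: (i) membership in $X$ forces positive definiteness, (ii) positive definiteness forces positive eigenvalues, and (iii) inversion sends each eigenvalue to its reciprocal, so that the trace of the inverse equals the sum of reciprocal eigenvalues.

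First, I would unpack the definition (\ref{eq:def_X}) of $X$. By construction $W(p) = \int_0^\infty \exp(tA)\diag(p)\exp(tA^\top)\,\mathrm{d}t$ is symmetric and positive semidefinite, and $p \in X$ precisely requires $W(p) \succ O$. Since $W(p)$ is real symmetric, the spectral theorem gives an orthonormal eigenbasis $v_1,\dots,v_n$ with real eigenvalues $\lambda_1,\dots,\lambda_n$; positive definiteness then yields $\lambda_k > 0$ for every $k$, which is the first assertion.

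Next, I would observe that because each $\lambda_k > 0$, the matrix $W(p)$ is invertible. Applying $W(p)^{-1}$ to the identity $W(p)v_k = \lambda_k v_k$ yields $W(p)^{-1} v_k = \lambda_k^{-1} v_k$, so $\{v_k\}$ is also an orthonormal eigenbasis of $W(p)^{-1}$ with eigenvalues $\lambda_1^{-1},\dots,\lambda_n^{-1}$. This gives the second assertion.

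Finally, since the trace is basis-invariant and equals the sum of eigenvalues for any diagonalizable matrix, I would conclude
\begin{equation}
    g(p) = \tr(W(p)^{-1}) = \sum_{k=1}^n \frac{1}{\lambda_k},
\end{equation}
completing the proof. I do not anticipate any real obstacle here — the entire argument is a direct invocation of the spectral theorem plus the trace-equals-sum-of-eigenvalues identity, with the only subtlety being to cite (\ref{eq:def_X}) to justify the switch from ``positive semidefinite'' to ``positive definite.''
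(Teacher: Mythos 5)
Your argument is correct: it is the standard spectral-theorem chain (membership in $X$ gives $W(p)\succ O$, hence positive eigenvalues, hence reciprocal eigenvalues for $W(p)^{-1}$, hence $\tr(W(p)^{-1})=\sum_k \lambda_k^{-1}$), and the paper itself states this lemma without proof in the preliminaries, treating it as exactly this elementary fact. There is nothing to add or correct.
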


%%%%%%%%%%%%%%%%%%%%%%%%%%%%%%%%%%%%%%%%%%%%%%%%%%%%%%%%%%%%%%%%%%%%%%%%%
\section{Controllability scores for infinite-dimensional systems}\label{section:controllability_score_infinite_system}

As explained in Section \ref{subsec:finite_controllability_score},
controllability scores were defined only for finite dimensional systems in \cite{satoterasaki2024}.
In this section, we extend the definitions of controllability scores to infinite-dimensional systems.

\subsection{Optimization Problems for VCS and AECS}\label{sec:definition_of_controllability_score}

From this section, we consider
\begin{equation}\label{eq:system_equation_infinite_no_input}
    \begin{split}
        \dot{x}(t) = Ax(t) \quad \textrm{for $x(t)\in\dom(A)$},
    \end{split}
\end{equation} 
where $A$ is an infinitesimal generator of a strongly continuous semigroup on $\mathcal{H}$, which
 $\mathcal{H}$ is a separable real Hilbert space, that serves as a state space.
We assume that $A$ is exponentially stable. In other words, there exist constants $M,\omega >0$ 
such that $\|\exp(tA)\|\leq M\exp(-\omega t)$ holds for all $t\geq 0$.

As mentioned in Section \ref{subsec:finite_controllability_score}, for finite-dimensional systems, a one-to-one virtual input was added.
This was represented by a diagonal matrix $B$, which satisfies $Be_i = \sqrt{p_i}e_i$ for the standard basis $e_i\in\mathbb{R}^n$, as shown in (\ref{eq:input_network}).
In this case, $e_i$ corresponds to a state node and the importance of $e_i$ is considered.
We adopt this idea.

For infinite systems, we can consider a complete orthonormal system $\{e_i\}_{i=1}^\infty$ as state nodes, with the aim of evaluating centrality.
For example, if we want to control the temperature of a metal rod, adding heat at a specific point is one way of controlling it.
This can be expressed by standardized simple functions, and if the points we control are disjoint, the corresponding simple functions form an orthonormal system in $L^2$.

Thus, we associate state nodes with a complete orthonormal system $\{e_i\}_{i=1}^\infty$.
Moreover, we add a virtual input $\sqrt{p_i}e_i$ to the system, where $\{p_i\}_{i=1}^\infty$ satisfies $\sum_{i=1}^\infty p_i = 1$ and $p_i\geq 0$,
and define a linear operator $B$ as $Be_i = \sqrt{p_i}e_i$ for $i\in\mathbb{N}$.
Each $p_i$ represents the weight of the input.
Then, we augment system \eqref{eq:system_equation_infinite_no_input} with a
 virtual input $u(t)$:
\begin{equation}\label{eq:state_equation_infinite_with_input_diagonal_B}
    \dot{x}(t) = Ax(t) + Bu(t) \quad\textrm{for $x(t)\in\dom(A)$}.
\end{equation}

We introduce the reachability operator and controllability Gramian to characterize the controllability of system \eqref{eq:state_equation_infinite_with_input_diagonal_B} in the presence of the virtual input.

\begin{definition}
    For $T\in[0,\infty)$, we define the reachability operator $L(p,T):L^2([0,T];\mathcal{H})\to\mathcal{H}$ be an operator defined by 
    $
        L(p,T)u = \int_0^T \exp((T-t)A)Bu(t) \mathrm{d}t.
    $
    In addition, we define the controllability Gramian of system \eqref{eq:state_equation_infinite_with_input_diagonal_B} as 
    \begin{equation}
    W(p,T) = \int_0^T \exp(tA)BB^* \exp(tA)^* \mathrm{d}t.
\end{equation}
    Moreover, since $A$ is exponentially stable and $B$ is a bounded operator, $W(p,T)$ has a limit with $T\to\infty$,
    \begin{equation}
    W(p) \coloneq W(p,\infty) = \int_0^\infty \exp(tA)BB^* \exp(tA)^* \mathrm{d}t.
\end{equation}
\end{definition}

We now establish a key property of $W(p,T)$.

\begin{lemma}\label{lem:compactness_of_controllability_Gramian}
    The controllability Gramian $W(p,T)$ is a compact operator for any $T\in [0,\infty]$ and $\{p_i\}_{i=1}^\infty$ with $\sum_{i=1}^\infty p_i = 1$, $p_i\geq 0$.
    Furthermore, there exist a sequence of nonnegative real numbers $\{\lambda_k\}_{k=1}^\infty$ and a complete orthonormal system $\{z_k\}_{k=1}^\infty$ such that 
    \begin{equation}\label{eq:specral_decomposition_W}
        W(p,T)z = \sum_{k=1}^\infty \lambda_k \langle z,z_k\rangle z_k.
    \end{equation}
    Here, $T\in[0,\infty]$ means $T\in[0,\infty)$ or $T=\infty$.
\end{lemma}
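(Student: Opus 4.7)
The plan is to show that $W(p,T)$ is in fact trace class, from which compactness follows, and then to apply the spectral theorem for compact self-adjoint operators.

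First I would exploit the constraint $\sum_{i=1}^\infty p_i = 1$ to establish that $B$ is Hilbert--Schmidt. Indeed, $Be_i = \sqrt{p_i}e_i$ gives $\|B\|_{\mathrm{HS}}^2 = \sum_{i=1}^\infty p_i = 1$. Combined with the exponential stability bound $\|\exp(tA)\| \leq Me^{-\omega t}$, this yields $\|\exp(tA)B\|_{\mathrm{HS}} \leq Me^{-\omega t}$, so the integrand
$\exp(tA)BB^*\exp(tA)^* = (\exp(tA)B)(\exp(tA)B)^*$
is trace class for each $t \geq 0$, with trace norm at most $M^2 e^{-2\omega t}$. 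In particular $\int_0^\infty \|\exp(tA)BB^*\exp(tA)^*\|_{\mathrm{tr}}\, dt \leq M^2/(2\omega) < \infty$, covering both the case $T < \infty$ and $T = \infty$.

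Next I would verify that $t \mapsto \exp(tA)BB^*\exp(tA)^*$ is measurable as a map into the trace-class Banach space. The crucial step is that $t \mapsto \exp(tA)B$ is continuous in the Hilbert--Schmidt norm: for any $t_0 \geq 0$,
$\|\exp(tA)B - \exp(t_0 A)B\|_{\mathrm{HS}}^2 = \sum_{i=1}^\infty p_i \|\exp(tA)e_i - \exp(t_0 A)e_i\|^2$,
where each term tends to $0$ by strong continuity of the semigroup and is dominated by $4M^2 p_i$ with $\sum_i p_i < \infty$, so dominated convergence gives the conclusion. It follows that $t \mapsto \exp(tA)BB^*\exp(tA)^*$ is continuous in the trace norm. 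Combined with the integrability estimate, the Bochner integral $W(p,T) = \int_0^T \exp(tA)BB^*\exp(tA)^*\, dt$ exists in the trace-class Banach space for every $T \in [0,\infty]$; therefore $W(p,T)$ is trace class and, in particular, compact.

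Finally I would note that $W(p,T)$ is self-adjoint and positive semi-definite, since
$\langle W(p,T)z, z\rangle = \int_0^T \|B^*\exp(tA)^* z\|^2\, dt \geq 0$.
The spectral theorem for compact self-adjoint operators then produces an orthonormal basis of the closure of the range of $W(p,T)$ consisting of eigenvectors with strictly positive eigenvalues; augmenting this by an orthonormal basis of $\ker W(p,T)$ (associated to zero eigenvalues) gives a complete orthonormal system $\{z_k\}_{k=1}^\infty$ of $\mathcal{H}$ and nonnegative eigenvalues $\{\lambda_k\}_{k=1}^\infty$ yielding the representation \eqref{eq:specral_decomposition_W}. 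I expect the main obstacle to be the trace-norm integrability of the integrand; everything else is a direct consequence of the spectral theorem once this is in place. The Hilbert--Schmidt property of $B$, forced by $\sum_i p_i < \infty$, is what makes this argument succeed.
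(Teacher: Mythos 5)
Your proposal is correct, but it reaches compactness by a genuinely different route than the paper. Both arguments turn on the same key observation --- that $\sum_{i=1}^\infty p_i = 1$ forces $B$ to be Hilbert--Schmidt --- but the paper then factors the Gramian as $W(p,T) = L(p,T)L(p,T)^*$ through the reachability operator, deduces compactness of $L(p,T)^*$ (and hence of $W(p,T)$) from compactness of $B$, treats $T=\infty$ by an operator-norm limit $W(p,T)\to W(p)$, and reads off self-adjointness and positive semidefiniteness from the factorization. You instead work directly with the integrand: $\exp(tA)B$ is Hilbert--Schmidt with $\|\exp(tA)B\|_{\mathrm{HS}}\le Me^{-\omega t}$, so the integrand is trace class with integrable trace norm, and your continuity argument (dominated convergence over the index $i$, using $\sum_i p_i<\infty$) makes $W(p,T)$ a Bochner integral in the trace-class Banach space. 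This buys you several things: a strictly stronger conclusion ($W(p,T)$ is trace class, not merely compact), a uniform treatment of $T<\infty$ and $T=\infty$ without a separate limiting step, and it sidesteps the somewhat delicate justification needed in the paper's route --- namely that $L(p,T)^*$, which involves pointwise application of the compact operator $B^*$ to functions in $L^2([0,T];\mathcal{H})$, is itself compact (pointwise multiplication by a compact operator on $L^2([0,T];\mathcal{H})$ is not compact in general, so the bare phrase ``product of a compact and a bounded operator'' requires more care there). The cost is a slightly longer argument and the need to verify strong measurability of the trace-class-valued integrand, which your continuity step supplies. The final appeal to the spectral theorem for compact self-adjoint operators, with eigenvectors of nonzero eigenvalues completed by an orthonormal basis of the kernel, matches the paper's conclusion.
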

\begin{proof}
    First, we consider the case $T\in [0,\infty)$.
    The conjugate of $L(p,T)$ is given by ${L(p,T)}^* = B^*\exp((T-t)A)^*$.
    Since $\sum_{i=1}^\infty \|Be_i\|^2 = \sum_{i=1}^\infty p_i = 1$, $B$ is a Hilbert-Schmidt operator, and thus $B$ is a compact operator \cite[Theorem VI.22]{reed1980methods}.
    Furthermore, the product of a compact operator and a bounded operator is compact,
    that implies ${L(p,T)}^*$ and $L(p,T) {L(p,T)}^*$ are also compact.
    The finite-time controllability Gramian has the form of 
    \begin{equation}\label{eq:controllability_Gramian_conjugate_form}
        W(p,T) = L(p,T) {L(p,T)}^*.
    \end{equation}
    This implies that $W(p,T)$ is a compact operator.
    Moreover, since $W(p,T)$ converges to $W(p)$ with respect to 
    the operator norm as $T\to\infty$, $W(p)$ is also compact. By \eqref{eq:controllability_Gramian_conjugate_form},
    the controllability Gramian is self-adjoint and positive semidefinite.
    Therefore, the spectral decomposition of $W(p,T)$ has the form of \eqref{eq:specral_decomposition_W}
\end{proof}

From Lemma \ref{lem:compactness_of_controllability_Gramian}, we can order the eigenvalues of $W(p,T)$ in descending order.
We write $k$-th eigenvalue of $W(p,T)$ as $\mu_k(W(p,T))$.

The following lemma is introduced to present some properties related to controllability.
\begin{lemma}\label{lem:orthogonal_projection_P_W_n_dagger}
    Suppose $W(p,T)z = \sum_{k=1}^\infty \lambda_k \langle z,z_k\rangle z_k$, and suppose $\lambda_1,\lambda_2,\dots,\lambda_n>0$.
    Then, define $W_n^\dagger(p,T)$ as 
\begin{equation}\label{eq:definition_of_Wn_dagger}
        W_n^\dagger(p,T) z = \sum_{k=1}^n \frac{1}{\lambda_k}\langle z,z_k\rangle z_k
    \end{equation}
    and $P(p,T)$ as
    \begin{equation}\label{eq:definition_of_projection_P}
        P(p,T) = L(p,T)^* W_n^\dagger(p,T) L(p,T).
    \end{equation}
    Then, $P(p,T)$ is an orthogonal projection on $L^2([0,T];\mathcal{H})$ for any $T\in[0,\infty)$.
\end{lemma}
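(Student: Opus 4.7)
The plan is to verify the two defining properties of an orthogonal projection, namely self-adjointness $P(p,T)^* = P(p,T)$ and idempotence $P(p,T)^2 = P(p,T)$. Both reduce to algebraic manipulations on the closed-form expression $P(p,T) = L(p,T)^* W_n^\dagger(p,T) L(p,T)$, combined with the identity $W(p,T) = L(p,T) L(p,T)^*$ established in \eqref{eq:controllability_Gramian_conjugate_form}.

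First, for self-adjointness, I would observe that $W_n^\dagger(p,T)$, being defined in \eqref{eq:definition_of_Wn_dagger} as a finite spectral sum with real coefficients $1/\lambda_k$ against an orthonormal family, is a bounded self-adjoint operator on $\mathcal{H}$. Consequently
\begin{equation*}
    P(p,T)^* = \bigl(L(p,T)^* W_n^\dagger(p,T) L(p,T)\bigr)^* = L(p,T)^* W_n^\dagger(p,T)^* L(p,T) = P(p,T).
\end{equation*}

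Next, for idempotence, I would expand
\begin{equation*}
    P(p,T)^2 = L(p,T)^* W_n^\dagger(p,T) \, L(p,T) L(p,T)^* \, W_n^\dagger(p,T) L(p,T) = L(p,T)^* W_n^\dagger(p,T) W(p,T) W_n^\dagger(p,T) L(p,T),
\end{equation*}
using $L(p,T) L(p,T)^* = W(p,T)$. The core step is then to prove the operator identity $W_n^\dagger(p,T) W(p,T) W_n^\dagger(p,T) = W_n^\dagger(p,T)$. This I would verify by testing against the orthonormal eigenbasis $\{z_k\}_{k=1}^\infty$: for $k \le n$ one has $W(p,T) z_k = \lambda_k z_k$ and $W_n^\dagger(p,T) z_k = \lambda_k^{-1} z_k$, so the left-hand side maps $z_k$ to $\lambda_k^{-1} z_k$; for $k > n$, $W_n^\dagger(p,T) z_k = 0$ annihilates both sides. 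Substituting this identity back yields $P(p,T)^2 = L(p,T)^* W_n^\dagger(p,T) L(p,T) = P(p,T)$.

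There is no genuine obstacle here, as the argument is essentially formal once one has the spectral decomposition of $W(p,T)$ and the factorization $W(p,T) = L(p,T) L(p,T)^*$; the only point requiring care is to keep track of the fact that $W_n^\dagger(p,T)$ is a partial pseudoinverse (it inverts $W(p,T)$ only on $\Span\{z_1,\dots,z_n\}$ and vanishes on the complement), which is exactly what makes $W_n^\dagger(p,T) W(p,T) W_n^\dagger(p,T) = W_n^\dagger(p,T)$ hold despite $W(p,T)$ possibly having zero or nonzero eigenvalues beyond index $n$.
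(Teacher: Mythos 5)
Your proposal is correct and follows essentially the same route as the paper: verify self-adjointness and idempotence, reducing the latter to the identity $W_n^\dagger(p,T)W(p,T)W_n^\dagger(p,T) = W_n^\dagger(p,T)$ via the factorization $W(p,T) = L(p,T)L(p,T)^*$. Your explicit verification of that identity on the eigenbasis is a small elaboration of a step the paper states without detail, but the argument is the same.
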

\begin{proof}
    Since $L(p,T)$ and $W_n^\dagger(p,T)$ are bounded operators, $P(p,T)$ is bounded.
    In addition, it is straightforward to confirm $P(p,T)^* = P(p,T)$.
    Moreover,
    \begin{equation}
        \begin{split}
            P(p,T)^2
            &= L(p,T)^* W_n^\dagger(p,T)L(p,T)L(p,T)^* W_n^\dagger(p,T)L(p,T)\\
            &= L(p,T)^* W_n^\dagger(p,T)W(p,T) W_n^\dagger(p,T)L(p,T)= P(p,T),
        \end{split}
    \end{equation}
    where the last equality follows from $W_n^\dagger(p,T)W(p,T)W_n^\dagger(p,T) = W_n^\dagger(p,T)$.
\end{proof}

Using Lemma \ref{lem:orthogonal_projection_P_W_n_dagger}, we first introduce the measure that corresponds to VCS in finite-dimensional systems.
To this end, we consider the reachable space of system \eqref{eq:state_equation_infinite_with_input_diagonal_B} defined as
\begin{equation}
    \mathcal{E}(T) \coloneq
    \left\{
        L(p,T)u \in\mathcal{H} \middle|
        \begin{aligned}
            u\in L^2([0,T];\mathcal{H}), \int_0^T \|u(t)\|^2 \mathrm{d}t \leq 1
        \end{aligned}
    \right\}
\end{equation}
and show the following proposition.

\begin{proposition}\label{prop:VCS_intersection}
    Let $W(p,T)z = \sum_{k=1}^\infty \lambda_k\langle z,z_k\rangle z_k$
    If $\lambda_1,\lambda_2,\dots,\lambda_n>0$, the intersection of the reachable space and the linear span of $\{z_1,z_2,\dots,z_n\}$ has the following form.
    \begin{equation}\label{eq:VCS_intersection}
        \begin{split}
            &\mathcal{E}(T) \cap \Span\{z_1,z_2,\dots,z_n\}\\
            &= \left\{ a_1 z_1 + a_2 z_2 + \dots + a_n z_n \left\vert \frac{{a_1}^2}{\lambda_1} + \frac{{a_2}^2}{\lambda_2} + \dots + \frac{{a_n}^2}{\lambda_n} \leq 1\right.\right\}.
        \end{split}
    \end{equation}
\end{proposition}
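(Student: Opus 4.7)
The plan is to prove the two inclusions separately, and the key tool in both directions is the candidate input $u_0 = L(p,T)^* W_n^\dagger(p,T) x$, whose image under $L(p,T)$ is exactly $x$ whenever $x \in \Span\{z_1,\ldots,z_n\}$, and whose squared norm is exactly $\sum_{k=1}^n a_k^2/\lambda_k$. Lemma \ref{lem:orthogonal_projection_P_W_n_dagger} is then used to show that $u_0$ is minimal in norm among all $u$ steering the system to $x$, which is precisely what lets the energy constraint $\|u\|\le 1$ translate into the ellipsoid inequality.

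For the easy inclusion ($\supseteq$), I would start with $x = \sum_{k=1}^n a_k z_k$ with $\sum_{k=1}^n a_k^2/\lambda_k \le 1$ and define $u_0 := L(p,T)^* W_n^\dagger(p,T) x$. Using the identities $W(p,T) = L(p,T) L(p,T)^*$ and the spectral form of $W(p,T)$, a direct computation gives
\begin{equation*}
L(p,T) u_0 = W(p,T) W_n^\dagger(p,T) x = \sum_{k=1}^n \lambda_k \frac{1}{\lambda_k} \langle x, z_k\rangle z_k = x,
\end{equation*}
and likewise
\begin{equation*}
\|u_0\|^2 = \langle L(p,T)^* W_n^\dagger(p,T) x,\, L(p,T)^* W_n^\dagger(p,T) x\rangle = \langle x,\, W_n^\dagger(p,T) x\rangle = \sum_{k=1}^n \frac{a_k^2}{\lambda_k} \le 1,
\end{equation*}
so $x \in \mathcal{E}(T)$.

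For the other inclusion ($\subseteq$), suppose $x \in \mathcal{E}(T) \cap \Span\{z_1,\ldots,z_n\}$, so that $x = \sum_{k=1}^n a_k z_k$ and there exists $u \in L^2([0,T];\mathcal{H})$ with $L(p,T) u = x$ and $\int_0^T \|u(t)\|^2\mathrm{d}t \le 1$. Apply $P(p,T)$ to $u$; by definition,
\begin{equation*}
P(p,T) u = L(p,T)^* W_n^\dagger(p,T) L(p,T) u = L(p,T)^* W_n^\dagger(p,T) x = u_0.
\end{equation*}
Because $P(p,T)$ is an orthogonal projection (Lemma \ref{lem:orthogonal_projection_P_W_n_dagger}), $\|P(p,T) u\| \le \|u\|$, so $\|u_0\|^2 \le \|u\|^2 \le 1$. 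Combining this with the norm computation from the previous paragraph yields $\sum_{k=1}^n a_k^2/\lambda_k = \|u_0\|^2 \le 1$, which is the ellipsoid inequality.

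The only even mildly subtle step is the algebraic identity $W(p,T) W_n^\dagger(p,T) x = x$ for $x \in \Span\{z_1,\ldots,z_n\}$, but this is immediate once one expands both operators in the $\{z_k\}$ basis. The heavy lifting has been done in Lemma \ref{lem:orthogonal_projection_P_W_n_dagger}: the fact that $P(p,T)$ is an orthogonal projection is exactly what converts ``some $u$ of norm at most one steers to $x$'' into ``the specific minimum-norm candidate $u_0$ has norm at most one.'' So I do not expect a real obstacle here beyond being careful with the adjoint manipulations.
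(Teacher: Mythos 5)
Your proof is correct and follows essentially the same route as the paper's: both directions hinge on Lemma \ref{lem:orthogonal_projection_P_W_n_dagger}, the identity $W(p,T)W_n^\dagger(p,T)x = x$ on $\Span\{z_1,\dots,z_n\}$, and the computation $\langle W_n^\dagger(p,T)x,x\rangle = \sum_k a_k^2/\lambda_k$. The only (minor, and arguably cleaner) difference is in the $\supseteq$ direction, where you exhibit the admissible input $u_0 = L(p,T)^*W_n^\dagger(p,T)x$ explicitly, whereas the paper first argues $z_i \in \ran W(p,T) \subset \ran L(p,T)$ to obtain some preimage $u$ and then shows $P(p,T)u$ works.
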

\begin{proof}
    First, we show 
    \begin{equation}\label{eq:E_T_subset_less_than_1}
        \begin{split}
            &\mathcal{E}(T) \cap \Span\{z_1,z_2,\dots,z_n\}\\
            &\subset \left\{ a_1 z_1 + a_2 z_2 + \dots + a_n z_n \left\vert \frac{{a_1}^2}{\lambda_1} + \frac{{a_2}^2}{\lambda_2} + \dots + \frac{{a_n}^2}{\lambda_n} \leq 1\right.\right\}.
        \end{split}
    \end{equation}
    Let $x\in \mathcal{E}(T) \cap \Span\{z_1,z_2,\dots,z_n\}$.
    Since $x\in\Span\{z_1,z_2,\dots,z_n\}$, there exists $\{a_k\}_{k=1}^n$ such that 
    $x = \sum_{k=1}^n a_k z_k$.
    Thus,
    \begin{equation}\label{eq:W_dagger_equal_sum_of_inner_product}
        \langle W_n^\dagger(p,T) x, x\rangle 
        = \left\langle \sum_{k=1}^n \frac{a_k}{\lambda_k}z_k,\sum_{k=1}^n a_k z_k\right\rangle
        = \sum_{k=1}^n \frac{{a_k}^2}{\lambda_k}.
    \end{equation}
    Therefore, it is sufficient to show $\langle W_n^\dagger (p,T)x,x\rangle\leq 1$.
    Since $x\in \mathcal{E}(T)$, there exists $u\in L^2([0,T];\mathcal{H})$ such that $\int_0^T \|u(t)\|^2\mathrm{d}t \leq 1$ and $x = L(p,T) u$.
    Then,
    \begin{equation}\label{eq:W_n_dagger_less_than_1}
        \begin{split}
            \langle W_n^\dagger(p,T)x,x\rangle
            &= \langle L(p,T)^*W_n^\dagger(p,T)L(p,T)u, u\rangle\\
            &= \langle P(p,T)u,u\rangle
            = \|P(p,T)u\|^2\leq \|u\|^2
            \leq 1,
        \end{split}
    \end{equation}
    where $P(p,T)$ is defined as \eqref{eq:definition_of_projection_P} in Lemma \ref{lem:orthogonal_projection_P_W_n_dagger} and we used the fact that $P(p,T)$ is an orthogonal projection.
    Thus, equation \eqref{eq:E_T_subset_less_than_1} holds.
    
    Next, we show 
    \begin{equation}\label{eq:E_T_supset_less_than_1}
        \begin{split}
            &\left\{ a_1 z_1 + a_2 z_2 + \dots + a_n z_n \left\vert \frac{{a_1}^2}{\lambda_1} + \frac{{a_2}^2}{\lambda_2} + \dots + \frac{{a_n}^2}{\lambda_n} \leq 1\right.\right\}\\
            &\subset \mathcal{E}(T)\cap \Span\{z_1,z_2,\dots,z_n\}.
        \end{split}
    \end{equation}
    Let $v\in \{ a_1 z_1 + a_2 z_2 + \dots + a_n z_n \mid \frac{{a_1}^2}{\lambda_1} + \frac{{a_2}^2}{\lambda_2} + \dots + \frac{{a_n}^2}{\lambda_n} \leq 1\}$.
    Since $\lambda_i z_i =W(p,T)z_i$ and $\lambda_i >0$, $z_i\in \ran W(p,T)$.
    Moreover, since $W(p,T) = L(p,T)L(p,T)^*$, $z_i\in \ran L(p,T)$.
    This implies that there exists $u\in L^2([0,T];\mathcal{H})$ such that $v = L(p,T)u$.
    Then, as in \eqref{eq:W_n_dagger_less_than_1}, it holds
    \begin{equation}\label{eq:W_n_dagger_less_than_1_same}
        1\geq \langle W_n^\dagger(p,T)v,v\rangle = \|P(p,T)u\|^2.
    \end{equation}
    Here, it holds
    \begin{equation}
        L(p,T)P(p,T)u = L(p,T)L(p,T)^*W_n^\dagger(p,T)L(p,T)u = W(p,T)W_n^\dagger(p,T)v.
    \end{equation}
    Since $v\in \Span\{z_1,z_2,\dots,z_n\}$, $W(p,T)W_n^\dagger(p,T)v = v$.
    This implies that $v = L(p,T)P(p,T)u$. 
    By equation \eqref{eq:W_n_dagger_less_than_1_same}, $v \in \mathcal{E}(T)$.
    Therefore, equation \eqref{eq:E_T_supset_less_than_1} holds.
\end{proof}

The volume of $\{(a_1,a_2,\dots,a_n)\in\mathbb{R}^n \mid \frac{{a_1}^2}{\lambda_1} + \frac{{a_2}^2}{\lambda_2} + \dots + \frac{{a_n}^2}{\lambda_n}\leq 1\}$ in Proposition \ref{prop:VCS_intersection} is $V_n\sqrt{\lambda_1\lambda_2\dots\lambda_n}$, where $V_n$ is the volume of $n$ dimensional unit ball.
Therefore, we can consider $n$ dimensional volume of $\mathcal{E}(T)\cap\Span\{z_1,z_2,\dots,z_n\}$ as $V_n\sqrt{\lambda_1\lambda_2\dots\lambda_n}$.
Furthermore, this volume has its maximum if $\lambda_k = \mu_k(W(p,T))$ ($k=1,2,\dots,n$), where $\mu_k(W(p,T))$ is the $k$-th eigenvalue of $W(p,T)$.
Thus, we can consider $\prod_{k=1}^n \mu_k(W(p,T))$ as a measure of centrality.
In addition, since $A$ is exponentially stable, we can consider $\mathcal{E}(T)$ with $T\to\infty$.
Therefore, we use $\prod_{k=1}^n \mu_k(W(p))$ as a measure of controllability.

Thus, we consider the following minimization problem, whose optimal solution is called the Volumetric Controllability Score (VCS).
\begin{framed}
    \begin{align}\label{problem:VCS_infinite}
        \begin{aligned}
            &&& \text{minimize} && f(p) \coloneq -\log\prod_{k=1}^n \mu_k(W(p))\\
            &&& \text{subject to} && \{p_i\}_{i=1}^\infty \in \Delta\cap X.
        \end{aligned}
    \end{align}
\end{framed}
\noindent
Here,
\begin{align*}
 \Delta = \left\{\{p_i\}_{i=1}^\infty \left\vert \sum_{i\in I}p_i = 1, p_i\geq 0\right.\right\},\quad
X = \left\{\{p_i\}_{i=1}^\infty \mid \mu_n(W(p))>0\right\},
\end{align*}
and $n$ is a positive integer which can be chosen arbitrarily.
Note that in Problem \eqref{problem:VCS_infinite}, we cannot consider the case where $n\to\infty$. 
In fact, since $W(p)$ is a compact operator from Lemma \ref{lem:compactness_of_controllability_Gramian} and 
$\mu_n(W(p))$ is an eigenvalue of $W(p)$, we have $\mu_n(W(p))\rightarrow 0$ as $n\to\infty$.
Thus, $f(p)$ diverges as $n\to\infty$.
However, for any finite $n$, $f(p)$ is well-defined.
Therefore,  $n$ can be chosen arbitrarily from $\mathbb{N}$.

We next introduce another measure that corresponds to AECS.
 To this end, 
we use Lemma \ref{lem:orthogonal_projection_P_W_n_dagger} to prove the following proposition.

\begin{proposition}\label{prop:AECS_reachable_energy_span}
    Let $W(p,T)z = \sum_{k=1}^\infty \lambda_k \langle z,z_k\rangle z_k$.
    Then, the minimum energy required to reach $x_f\coloneq a_1z_1 + a_2z_2 + \dots + a_n z_n$ is the optimal value of the following problem.
    \begin{align}\label{eq:minimal_energy_problem_infinite_span_z}
        \begin{aligned}
            &&& \text{minimize} && \int_0^T \|u(t)\|^2 \mathrm{d}t\\
            &&& \text{subject to} && L(p,T)u = x_f
        \end{aligned}
    \end{align}
    If $\lambda_1,\lambda_2,\dots,\lambda_n>0$, the optimal value of Problem \eqref{eq:minimal_energy_problem_infinite_span_z} is 
    $\frac{{a_1}^2}{\lambda_1} + \frac{{a_2}^2}{\lambda_2} + \dots + \frac{{a_n}^2}{\lambda_n}$.
\end{proposition}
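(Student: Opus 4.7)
The plan is to exhibit an explicit optimal input using the spectral pseudo-inverse $W_n^\dagger(p,T)$ from Lemma \ref{lem:orthogonal_projection_P_W_n_dagger} and then invoke the orthogonal projection property of $P(p,T)$ to show that no feasible input can do better. The natural candidate, mirroring the finite-dimensional minimum-energy formula, is
\begin{equation*}
u^\star := L(p,T)^* W_n^\dagger(p,T) x_f.
\end{equation*}

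First, I would verify that $u^\star$ is feasible. Applying $L(p,T)$ and using $W(p,T) = L(p,T) L(p,T)^*$ gives $L(p,T) u^\star = W(p,T) W_n^\dagger(p,T) x_f$. Since $W_n^\dagger(p,T) x_f = \sum_{k=1}^n (a_k/\lambda_k) z_k$ and $W(p,T) z_k = \lambda_k z_k$, this telescopes to $\sum_{k=1}^n a_k z_k = x_f$. In particular the feasible set is nonempty, so the infimum in \eqref{eq:minimal_energy_problem_infinite_span_z} is well-posed.

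Next, I would compute the energy of $u^\star$ by moving the adjoint across the inner product:
\begin{equation*}
\|u^\star\|^2 = \langle L(p,T) L(p,T)^* W_n^\dagger(p,T) x_f,\, W_n^\dagger(p,T) x_f\rangle = \langle W(p,T) W_n^\dagger(p,T) x_f,\, W_n^\dagger(p,T) x_f\rangle.
\end{equation*}
Using the identity $W(p,T) W_n^\dagger(p,T) x_f = x_f$ from the feasibility step and then the same pointwise calculation as in \eqref{eq:W_dagger_equal_sum_of_inner_product}, this reduces to $\langle x_f, W_n^\dagger(p,T) x_f\rangle = \sum_{k=1}^n a_k^2/\lambda_k$, giving the claimed value.

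Finally, for optimality I would take an arbitrary feasible $u$ with $L(p,T) u = x_f$ and observe that
\begin{equation*}
P(p,T) u = L(p,T)^* W_n^\dagger(p,T) L(p,T) u = L(p,T)^* W_n^\dagger(p,T) x_f = u^\star.
\end{equation*}
Since $P(p,T)$ is an orthogonal projection on $L^2([0,T];\mathcal{H})$ by Lemma \ref{lem:orthogonal_projection_P_W_n_dagger}, contraction gives $\|u^\star\|^2 = \|P(p,T) u\|^2 \leq \|u\|^2$, which is precisely the desired lower bound. I do not expect any serious obstacle; the only point to be careful about is the algebraic identity $W(p,T) W_n^\dagger(p,T) x_f = x_f$, which is not a general identity but holds here precisely because $x_f$ is supported on the first $n$ eigenvectors and the hypothesis $\lambda_1,\dots,\lambda_n > 0$ ensures $W_n^\dagger(p,T)$ is well-defined on this span.
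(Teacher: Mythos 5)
Your proposal is correct and follows essentially the same route as the paper's own proof: the same candidate input $L(p,T)^* W_n^\dagger(p,T)x_f$, the same feasibility check via $W(p,T)W_n^\dagger(p,T)x_f = x_f$ on $\Span\{z_1,\dots,z_n\}$, the same optimality argument through the orthogonal projection $P(p,T)$, and the same energy computation. No gaps.
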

\begin{proof}
    Let $u_{\text{opt}} = L(p,T)^* W_n^\dagger(p,T)x_f$, where $W_n^{\dagger}(p,T)$ is defined as \eqref{eq:definition_of_Wn_dagger} in Lemma \ref{lem:orthogonal_projection_P_W_n_dagger}. 
    Then, $L(p,T)u_{\text{opt}} = L(p,T)L(p,T)^* W_n^\dagger(p,T)x_f = W(p,T)W_n^\dagger(P,T)x_f = x_f$
    since $x_f \in \Span\{z_1,z_2,\dots,z_n\}$.
    Therefore, $u_{\text{opt}}$ satisfies the constraint condition.
    Moreover, if $u$ satisfies the constraint condition,
    \begin{equation}
        \begin{split}
            P(p,T)u
            = L(p,T)^*W_n^\dagger(p,T)L(p,T)u
            = L(p,T)^*W_n^\dagger(p,T)x_f
            = u_{\text{opt}},
        \end{split}
    \end{equation}
    where $P(p,T)$ is defined as \eqref{eq:definition_of_projection_P} in Lemma \ref{lem:orthogonal_projection_P_W_n_dagger}.
    Since $P(p,T)$ is an orthogonal projection, $\|u_{\text{opt}}\| = \|P(p,T)u\| \leq \|u\|$.
    This implies $u_{\text{opt}}$ is an optimal solution of Problem \eqref{eq:minimal_energy_problem_infinite_span_z}.
    Furthermore, its optimal value is 
    \begin{equation}
        \begin{split}
            \langle u_{\text{opt}},u_{\text{opt}}\rangle
            &= \langle L(p,T)^*W_n^\dagger(p,T) x_f, L(p,T)^*W_n^\dagger(p,T) x_f\rangle\\
            &= \langle W_n^\dagger(p,T)x_f,x_f\rangle
            = \frac{{a_1}^2}{\lambda_1} + \frac{{a_2}^2}{\lambda_2} + \dots + \frac{{a_n}^2}{\lambda_n}.
        \end{split}
    \end{equation}
\end{proof}

If $(a_1,a_2,\dots,a_n)$ follows the uniform distribution on the $n$ dimensional unit sphere, the expected value of $\frac{{a_1}^2}{\lambda_1} + \frac{{a_2}^2}{\lambda_2} + \dots + \frac{{a_n}^2}{\lambda_n}$
is $\frac{1}{n} \left(\frac{1}{\lambda_1} + \frac{1}{\lambda_2} + \dots + \frac{1}{\lambda_n}\right)$.
Moreover, this value takes the minimum if $\lambda_k = \mu_k(W(p,T))$ ($k=1,2,\dots,n$), where $\mu_k(W(p,T))$ is the $k$-th eigenvalue of $W(p,T)$.
Therefore, we can consider $\frac{1}{\mu_1(W(p,T))} + \frac{1}{\mu_2(W(p,T))} + \dots + \frac{1}{\mu_2(W(p,T))}$ as a centrality measure.
In addition, since $A$ is exponentially stable, we can consider $T\to\infty$ and use $\frac{1}{\mu_1(W(p))} + \frac{1}{\mu_2(W(p))} + \dots + \frac{1}{\mu_n(W(p))}$ as a measure.

Proposition \ref{prop:AECS_reachable_energy_span} leads us to the following minimization problem, whose optimal solution is called the Average Energy Controllability Score (AECS).

\begin{framed}
    \begin{align}\label{problem:AECS_infinite}
        \begin{aligned}
            &&& \text{minimize} && g(p) \coloneq \sum_{k=1}^n \frac{1}{\mu_k(W(p))} \\
            &&& \text{subject to} && \{p_i\}_{i=1}^\infty \in \Delta\cap X.
        \end{aligned}
    \end{align}
\end{framed}
\noindent
Here, $n$ is a finite positive integer that can be chosen arbitrarily.
Similarly to Problem \eqref{problem:VCS_infinite}, in Problem \eqref{problem:AECS_infinite}, we cannot consider the case where $n\to\infty$ because $g(p)$ diverges as $n\to\infty$. However, as long as $g(p)$ is well-defined,  any finite $n\in\mathbb{N}$ is acceptable.

\subsection{Existence of VCS and AECS}\label{sec:existence_of_VCS_and_AECS}

Problems \eqref{problem:VCS_infinite} and \eqref{problem:AECS_infinite} have an optimal solution under the following assumptions.

\begin{assumption}\label{ass:p_is_less_than_a}
    Let $\{a_i\}_{i=1}^\infty$ satisfy $\sum_{i=1}^\infty a_i <\infty$ and $a_i\geq 0$.
    We assume $0\leq p_i \leq a_i$ for all $i\in\mathbb{N}$.
\end{assumption}

\begin{assumption}\label{ass:feasible_solution}
    There exist feasible solutions to Problems \eqref{problem:VCS_infinite} and \eqref{problem:AECS_infinite}. 
    That is, there exists $p\in \Delta\cap X$ that satisfies Assumption \ref{ass:p_is_less_than_a}.
\end{assumption}

To illustrate Assumptions \ref{ass:p_is_less_than_a} and \ref{ass:feasible_solution}, we now present an example based on the heat equation.

\begin{example}
Consider the heat equation with Dirichlet boundary conditions in $L^2(0, 1)$,
    $\dot{u}(t) = Au(t)$,
where $Au(t) = \frac{\mathrm{d}^2 u}{\mathrm{d}x^2}$.
When controlling the system, the location at which heat is applied significantly influences its performance. To capture this spatial effect, we introduce the following virtual input: For $k\in\mathbb{N}$,
 \begin{equation}
    \begin{split}
     &e_{2k-1}(x) = 
     \begin{cases}
         \sqrt{2\sqrt{2}}k & \text{if $x\in (\frac{1}{2} + \sum_{m=1}^{k-1} \frac{\sqrt{2}}{4m^2}, \frac{1}{2} + \sum_{m=1}^k \frac{\sqrt{2}}{4m^2})$}\\
        0 & \text{otherwise}
     \end{cases},
     \\
     &e_{2k}(x) = 
     \begin{cases}
         \sqrt{2\sqrt{2}}k & \text{if $x\in (\frac{1}{2} - \sum_{m=1}^{k} \frac{\sqrt{2}}{4m^2}, \frac{1}{2} - \sum_{m=1}^{k-1} \frac{\sqrt{2}}{4m^2})$}\\
        0 & \text{otherwise}
     \end{cases}.
     \end{split}
 \end{equation}
Since $\{e_k\}_{k=1}^\infty$ forms an orthonormal system in $L^2(0,1)$,
we can extend it to a complete orthonormal system by choosing functions $\{f_l\}_{l=1}^\infty$ such that $\{e_k\}_{k=1}^\infty \cup \{f_l\}_{l=1}^\infty$ is complete in $L^2(0, 1)$.
Define an operator $B$ representing the weights by $Be_i = \sqrt{p_k}e_k$, and $Bf_l = \sqrt{q_l}f_l$.
With this virtual input, system $\dot{u}(t) = Au(t)$ becomes
%\begin{equation}\label{eq:heat_equation_characteristic_function_input}
    $\dot{u}(t) = Au(t) + Bv$,
%\end{equation}
where $v$ represents the input.

To ensure that Assumption \ref{ass:p_is_less_than_a} holds, we define
        $a_{2k} = \frac{1}{k^2}$,
        $a_{2k-1} = \frac{1}{k^2}$
for $k\in\mathbb{N}$.
In addition, we also define $b_l = 0$.
Then, $\sum_{k=1}^\infty a_k + \sum_{l=1}^\infty b_l < \infty$.
Therefore, if we assume $p_k\leq a_k \coloneq \frac{1}{(k+1)^2}$, then, $q_l \leq b_l$ and Assumption \ref{ass:p_is_less_than_a} hold.
Since $b_l = 0$, $q_l = 0$ and this implies that $f_l$ does not affect the system, and we can purely evaluate the centrality of $e_k$.
Furthermore, the sequence $\{a_k\}$ is decreasing, reflecting the physical situation in which more heat can be applied near the center of the interval and less heat near the boundaries.

To ensure that Assumption \ref{ass:feasible_solution} holds,
we consider the case where $p=(p_1,p_2,p_3,\cdots) = (1,0,0,\cdots)$.
Then, the controllability Gramian $W(p)$ is given by 
\begin{equation}
    W(p) = \int_0^\infty \exp(tA)P_1P_1^*\exp(tA)^*\mathrm{d}t,
\end{equation}
where $P_1z = \langle z,e_1\rangle e_1$.
According to \cite[Theorem 6.4.5]{CurtainZwart2020}, the following are equivalent: (i) $W(p)$ is positive definite, (ii) $P_1^*\exp(tA)^*z = 0 \Rightarrow z = 0$ for all $t\geq 0$.
In this example, $\exp(tA)^*z = \exp(tA)z = \sum_{m=1}^\infty e^{-m^2\pi^2 t}\langle z,c_m\rangle c_m$ and $P_1^*\exp(tA)^*z = P_1\exp(tA)z = \sum_{m=1}^\infty e^{-m^2\pi^2t} \langle z,c_m\rangle \langle c_m,e_1\rangle e_1$, where $c_m(x) = \sqrt{2}\sin(m \pi x)$.
Therefore, $P_1^*\exp(tA)^* z = 0$ implies that $\sum_{m=1}^\infty e^{-m^2\pi^2 t} \langle z,c_m\rangle \langle c_m,e_1\rangle  = 0$ for all $t\geq 0$.
Since the functions $e^{-m^2\pi^2 t}$ are linearly independent, $\langle z,c_m\rangle \langle c_m,e_1\rangle = 0$ for all $m\in \mathbb{N}$.
Moreover, a simple calculation shows that $\langle c_m,e_1\rangle \neq 0$ for all $m\in\mathbb{N}$. Hence, we obtain $\langle z,c_m \rangle = 0$ for all $m\in\mathbb{N}$.
This implies $z=0$ because $\{c_m\}_{m=1}^\infty$ is a complete orthonormal system in $L^2(0,1)$.
Thus, $W(p)$ is positive semidefinite.
Furthermore, Lemma \ref{lem:compactness_of_controllability_Gramian} implies that $W(p)$ has a infinite number of positive eigenvaluess, which in turn verifies \ref{ass:feasible_solution}.
\end{example}

The following theorem shows that VCS and AECS exist under Assumptions \ref{ass:p_is_less_than_a} and \ref{ass:feasible_solution}.

\begin{theorem}\label{thm:existence_of_optimal_solution}
    If Assumptions \ref{ass:p_is_less_than_a} and \ref{ass:feasible_solution} hold, then an optimal solution exists for Problems \eqref{problem:VCS_infinite} and \eqref{problem:AECS_infinite}.
\end{theorem}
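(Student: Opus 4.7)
The plan is to apply the direct method of the calculus of variations: extract a subsequential limit from a minimizing sequence using compactness, then verify that the limit is feasible and attains the infimum. Four ingredients need to be combined: (i) compactness of the constraint set in $\ell^1$, thanks to Assumption \ref{ass:p_is_less_than_a}; (ii) continuity of $p\mapsto W(p)$ in an appropriate operator topology; (iii) continuity of eigenvalues of compact self-adjoint operators under such convergence; and (iv) a coercivity argument showing that along minimizing sequences the $n$-th eigenvalue of $W(p)$ stays uniformly bounded away from zero, so that the limit remains in the open set $X$.

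For (i), I would show that $K\coloneq\{p=\{p_i\}_{i=1}^\infty:0\leq p_i\leq a_i\}$ is compact in $\ell^1$. Given any sequence in $K$, a diagonal extraction yields pointwise convergence to some $p^*\in K$, and the summable majorant $\{a_i\}_{i=1}^\infty$ together with dominated convergence upgrades this to $\ell^1$ convergence. Since the affine constraint $\sum_i p_i=1$ is closed in $\ell^1$, the set $\Delta\cap K$ is compact; Assumption \ref{ass:feasible_solution} further guarantees that $\Delta\cap K\cap X$ is nonempty, so the infimum is finite and a minimizing sequence exists.

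For (ii) and (iii), note that $BB^*$ is the diagonal operator with eigenvalues $\{p_i\}_{i=1}^\infty$, so writing $B_p$ to emphasize the dependence on $p$ one has $\|B_pB_p^*-B_{p'}B_{p'}^*\|_{\mathrm{tr}}=\sum_i|p_i-p'_i|$, which shows that $p\mapsto BB^*$ is Lipschitz from $\ell^1$ into the trace-class operators. Combined with $\|\exp(tA)\|\leq Me^{-\omega t}$, the defining integral for $W(p)$ converges uniformly, yielding continuity of $p\mapsto W(p)$ from $\ell^1$ into the trace-class operators, and in particular into the compact self-adjoint operators in the operator-norm topology. Standard Weyl-type inequalities $|\mu_j(W(p))-\mu_j(W(p'))|\leq\|W(p)-W(p')\|_{\mathrm{op}}$ then give continuous dependence of each eigenvalue on $p$.

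The main obstacle is that $X$ is only an open constraint, so compactness alone does not prevent the minimizing sequence from drifting to the boundary $\{\mu_n(W(p))=0\}$; overcoming this is where the structure of the objective plays an essential role. For AECS, any bound $g(p^{(k)})\leq C$ immediately forces $\mu_j(W(p^{(k)}))\geq 1/C$ for $j=1,\ldots,n$. For VCS, one first obtains the uniform upper bound $\mu_1(W(p))\leq M^2/(2\omega)$ from $\|\exp(tA)\|\leq Me^{-\omega t}$ and $\|BB^*\|_{\mathrm{op}}\leq\max_i p_i\leq 1$, and then combines it with $\prod_{k=1}^n\mu_k(W(p^{(k)}))\geq e^{-f(p^{(k)})}$ and the ordering $\mu_1\geq\cdots\geq\mu_n$ to extract a uniform positive lower bound on $\mu_n$. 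Passing to the subsequential limit, continuity of eigenvalues yields $\mu_n(W(p^*))>0$, so $p^*\in X$, and continuity of $f$ (respectively $g$) at $p^*$ delivers $f(p^*)=\inf f$ (respectively $g(p^*)=\inf g$).
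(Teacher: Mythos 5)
Your proposal is correct and follows essentially the same route as the paper's Appendix~\ref{sec:proof_existence_optimal_solution}: compactness of the box $\{0\leq p_i\leq a_i\}$ in $\ell^1$ via a diagonal argument and dominated convergence, operator-norm continuity of $p\mapsto W(p)$ combined with the Weyl-type inequality $|\mu_k(W(p))-\mu_k(W(p'))|\leq\|W(p)-W(p')\|$, and the blow-up of $f$ and $g$ as $\mu_n(W(p))\to 0$ to keep the limit inside $X$. The only (harmless) differences are that you establish continuity of $W$ through a trace-norm Lipschitz bound on $BB^*$ rather than the paper's operator-norm estimate via $\max_i\sqrt{p_i}$, and you phrase the boundary-avoidance as coercivity along a minimizing sequence where the paper shows the sublevel set is closed, which are equivalent formulations of the same argument.
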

\begin{proof}
    See Appendix \ref{sec:proof_existence_optimal_solution}.
\end{proof}
\begin{remark}
    For Problems \eqref{problem:VCS_infinite} and \eqref{problem:AECS_infinite}, the continuity of $f(p)$ and $g(p)$ is not apparent unlike the finite dimensional case.
    This is because these functions are computed by selecting only a finite number of eigenvalues from among the infinitely many available, unlike the finite dimensional case.
    Therefore, we prove the continuity in Appendix \ref{sec:proof_existence_optimal_solution}.
\end{remark}

\subsection{Uniqueness of VCS and AECS}\label{sec:uniqueness_of_VCS_and_AECS}

Although we have established the existence of VCS and AECS in Theorem \ref{thm:existence_of_optimal_solution}, if the sequence $\{p_i\}_{i=1}^\infty$ contains infinitely many nonzero elements, proving the uniqueness of the optimal solution becomes challenging.
This poses a significant obstacle to using VCS and AECS as centrality measures. 
Therefore, we restrict the number of virtual inputs in system \eqref{eq:state_equation_infinite_with_input_diagonal_B} to be finite.
This assumption allows us to establish the uniqueness of the optimal solution.
Mathematically, this property can be expressed as follows.

\begin{assumption}\label{ass:input_node}
    Let $I\subset \mathbb{N}$ be a finite set that satisfies $p_i = 0$ for $i\notin I$.
\end{assumption}

Under Assumption \ref{ass:input_node}, $B$ satisfies $Be_i = \sqrt{p_i}e_i$ for $i\in I$, and 
$Be_i = 0$ for $i\notin I$.
Moreover, Assumption \ref{ass:p_is_less_than_a} is satisfied if $a_i = 1$ for $i\in I$ and $a_i = 0$ for $i \notin I$.
Therefore, Assumption \ref{ass:p_is_less_than_a} holds for some $\{a_i\}_{i=1}^\infty$ under Assumption \ref{ass:input_node}.

To prove the uniqueness of VCS and AECS, we introduce the following definition along with an assumption.

\begin{definition} \label{def_Wi}
    Define $W_i$ as 
    \begin{equation}
        W_i = \int_0^\infty \exp(tA)P_iP_i^* \exp(tA)^* \mathrm{d}t,
    \end{equation}
    where $P_i$ is the projection to $\Span \{e_i\}$, defined by $P_i z = \langle z,e_i\rangle e_i$.
\end{definition}

\begin{assumption}\label{ass:commute}
    We assume $\{W_i\}_{i=1}^\infty$ is commutative, which means that $W_i W_j = W_j W_i$ for all $i,j\in\mathbb{N}$.
\end{assumption}
%\begin{comment}

Note that Definition \ref{def_Wi} can be regarded as the special case of the controllability Gramian when $B = P_i$.

If $A$ is self-adjoint and $\{e_i\}_{i=1}^\infty$ are eigenvectors of
$A$, Assumption \ref{ass:commute} holds.
In fact, if $Ae_j = \lambda_j e_j$, it holds
\begin{equation}
    \exp(tA)e_j = \exp(\lambda_j t)e_j.
\end{equation}
Furthermore, it also holds
\begin{equation}
    \begin{split}
        &\exp(tA)P_i P_i^*\exp(tA)^* e_j
        = \exp(tA)P_i \exp(\lambda_j t)e_j\\
        &= \exp(\lambda_j t)\exp(tA)\delta_{ij}e_j
        = \exp(2\lambda_j t)\delta_{ij}e_j.
    \end{split}
\end{equation}
Therefore, $W_i$ satisfies
\begin{equation}\label{eq:W_i_e_j_to_prove_commute}
    \begin{split}
        W_ie_j = \int_0^\infty \exp(2\lambda_j t)\delta_{ij}e_j \mathrm{d}t
        &= \frac{\delta_{ij}}{-2\lambda_j}e_j.
    \end{split}
\end{equation}
This integral converges because $\lambda_j < 0$ due to $\|\exp(tA)\| \leq M\exp(-\omega t)$.
The equation \eqref{eq:W_i_e_j_to_prove_commute} implies 
\begin{equation}\label{eq:W_iW_je_k_to_prove_commute}
    \begin{split}
        W_i W_j e_k = \frac{\delta_{ik}\delta_{jk}}{4{\lambda_k}^2}e_k.
    \end{split}
\end{equation}
\eqref{eq:W_iW_je_k_to_prove_commute} is invariant under the interchange of $i$ and $j$.
Thus,  $W_i W_j e_k = W_j W_i e_k$ for any $i,j,k\in\mathbb{N}$.
Since $\{e_k\}_{k=1}^\infty$ is a complete orthonormal system,
it holds $W_i W_j = W_j W_i$ for any $i,j\in\mathbb{N}$,
meaning that Assumption \ref{ass:commute} holds.

\begin{example}
Consider the case where $\mathcal{H}=L^2(0,1)$, $A= \frac{\mathrm{d}^2}{\mathrm{d}x^2}$
with Dirichlet boundary conditions, and $e_k = \sqrt{2}\sin k\pi x$.
In this setting,
$A$ is self-adjoint, and $\{e_k\}_{k=1}^\infty$ forms a complete orthonormal system of eigenvectors of $A$.
Thus, Assumption \ref{ass:commute} holds.
\end{example}
%\end{comment}

\begin{lemma}\label{lem:simultaneously_diagonalized}
    Under Assumption \ref{ass:commute}, there exists a complete orthonormal system $\{z_k\}_{k=1}^\infty\subset\mathcal{H}$ and $\lambda_k^{(i)}\geq 0$ such that 
    \begin{equation}
    W_i z = \sum_{k=1}^\infty \lambda_{k}^{(i)} \langle z, z_k\rangle z_k \textrm{\quad for all $i\in\mathbb{N}$}.
\end{equation}
\end{lemma}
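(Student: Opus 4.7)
The plan is to reduce the claim to the classical spectral theorem for a single compact self-adjoint operator, combined with the elementary fact that a commuting family of self-adjoint operators on a finite-dimensional inner product space admits a common orthonormal basis of eigenvectors. The key idea is to build a single ``master'' compact operator whose spectral decomposition is a common refinement of the spectral decompositions of all the $W_i$.

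First, I would note that the argument used for Lemma~\ref{lem:compactness_of_controllability_Gramian}, applied with $B = P_i$ (which is trivially Hilbert--Schmidt), shows that each $W_i$ is a compact, self-adjoint, positive semidefinite operator, with $\|W_i\| \leq M^2/(2\omega)$ uniformly in $i$ by the exponential stability of $A$. Define
\begin{equation*}
W \coloneq \sum_{i=1}^\infty 2^{-i} W_i,
\end{equation*}
which converges in operator norm to a compact, self-adjoint, positive semidefinite operator on $\mathcal{H}$ (compactness is preserved under norm limits). By Assumption~\ref{ass:commute}, each $W_j$ commutes with every partial sum of this series, and therefore with $W$ itself.

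Next, I would apply the spectral theorem to $W$ to obtain an orthogonal decomposition
\begin{equation*}
\mathcal{H} = \ker W \oplus \overline{\bigoplus_{\alpha} E_\alpha},
\end{equation*}
where the $E_\alpha$ are the finite-dimensional eigenspaces of $W$ corresponding to its nonzero eigenvalues. A crucial observation is the identity $\ker W = \bigcap_{i=1}^\infty \ker W_i$: if $z \in \ker W$, then $0 = \langle Wz, z\rangle = \sum_i 2^{-i} \langle W_i z, z\rangle$, and since each term is nonnegative by positivity of $W_i$, every $\langle W_i z, z\rangle$ vanishes, which forces $W_i z = 0$ via $\langle W_i z, z\rangle = \|W_i^{1/2} z\|^2$. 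Because each $W_i$ commutes with $W$, it leaves every spectral subspace of $W$ invariant; in particular, it annihilates $\ker W$ and preserves each $E_\alpha$.

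Finally, on each finite-dimensional $E_\alpha$ the restrictions $\{W_i|_{E_\alpha}\}_{i=1}^\infty$ form a commuting family of self-adjoint operators on a finite-dimensional Hilbert space, so a standard induction on dimension produces a common orthonormal eigenbasis of $E_\alpha$. On $\ker W$, since every $W_i$ vanishes, any orthonormal basis (which exists by separability of $\mathcal{H}$) trivially consists of common eigenvectors with eigenvalue zero. Concatenating these bases yields a countable complete orthonormal system $\{z_k\}_{k=1}^\infty \subset \mathcal{H}$ of common eigenvectors, and taking $\lambda_k^{(i)}$ to be the eigenvalue of $W_i$ on $z_k$ (nonnegative by positivity of $W_i$) gives the claimed spectral formula. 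The main delicate point, and the only place the commutativity assumption is essentially used, is in guaranteeing that each $W_i$ preserves all the spectral subspaces of $W$; the kernel identity $\ker W = \bigcap_i \ker W_i$, which relies on positivity rather than commutativity, is the other technical ingredient requiring care.
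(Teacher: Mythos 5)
Your proof is correct, but it takes a genuinely different route from the paper's. The paper invokes a general joint spectral decomposition theorem for commuting families of self-adjoint operators (Samoilenko, Ch.~1.1, Thm.~1), forming a joint spectral measure on $\mathbb{R}^\infty$ out of products $\prod_{i=1}^\infty P_i^{(\lambda_i)}$ of eigenprojections and extracting orthonormal bases from the ranges of these products. You instead build a single compact, self-adjoint, positive semidefinite ``master'' operator $W=\sum_i 2^{-i}W_i$ that commutes with every $W_j$, apply the ordinary spectral theorem for one compact operator, and then handle the pieces separately: finite-dimensional simultaneous diagonalization of the commuting self-adjoint restrictions on each nonzero eigenspace $E_\alpha$, and the kernel identity $\ker W=\bigcap_i\ker W_i$ (which correctly uses positivity, via $\langle W_iz,z\rangle=\|W_i^{1/2}z\|^2$, rather than commutativity) to dispose of $\ker W$. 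Your argument is self-contained and more elementary: it avoids the machinery of spectral measures on $\mathbb{R}^\infty$ and the convergence questions for infinite products of projections that the paper's sketch glosses over, and it makes the completeness of the resulting system explicit through the decomposition $\mathcal{H}=\ker W\oplus\overline{\bigoplus_\alpha E_\alpha}$. What the paper's approach buys in exchange is brevity (given the cited theorem) and applicability beyond compact families; in the present setting, where each $W_i$ is compact with uniformly bounded norm, your reduction is fully adequate and arguably cleaner.
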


\begin{proof}
    Applying (\cite{samoilenko2012spectral} Chapter1.1, Theorem 1), with
    \begin{equation}\label{eq:simultaneous_diagonalization}
        \mathrm{d}E(\lambda_1,\lambda_2,\dots)
        = 
        \begin{cases}
            \prod_{i=1}^\infty P_i^{(\lambda_i)} & (\text{if $\lambda_i$ is an eigenvalue of $W_i$})\\
            0 & \text{(otherwise)}
        \end{cases},
    \end{equation}
    where $\mathrm{d}E(\lambda_1,\lambda_2,\dots)$ is the spectral measure in $\mathbb{R}^\infty$, and $P_i^{(\lambda_i)}$ is the orthogonal projection onto the eigenspace of $W_i$ with eigenvalue $\lambda_i$,
    we can choose an orthogonal basis of $\ran \prod_{i=1}^\infty P_i^{(\lambda_i)}$.
    Therefore, taking orthonormal basis from $\ran \prod_{i=1}^\infty P_i^{(\lambda_i)}$ for all pairs of eigenvalues, we obtain a complete orthogonal system of $\mathcal{H}$.
    Since this system consists of eigenvectors of $W_i$, the expression of equation \eqref{eq:simultaneous_diagonalization} holds, and $\lambda_k^{(i)}\geq 0$ follows since $W_i$ is positive semidefinite.
\end{proof}

If the controllability Gramian has more than $n$ nonzero eigenvalues, 
it is difficult to determine whether
the objective function of Problems \eqref{problem:VCS_infinite} or \eqref{problem:AECS_infinite} is convex.
In fact, under Assumption \ref{ass:commute}, $W(p)$ can be written as 
\begin{equation}
    W(p)z = \sum_{i\in I} p_i W_i z
    = \sum_{k=1}^\infty \sum_{i\in I} p_i\lambda_{k}^{(i)} \langle z,z_k\rangle z_k.
\end{equation}
This implies that eigenvalues of $W(p)$ are 
$\left\{\sum_{i\in I} p_i\lambda_{k}^{(i)}\right\}_{k=1}^\infty$
and corresponding eigenvectors are $\left\{z_k\right\}_{k=1}^\infty$.
If $W(p)$ has more than $n$ nonzero eigenvalues,
the objective function to Problems \eqref{problem:VCS_infinite} or \eqref{problem:AECS_infinite} can be rewritten as the form 
\begin{equation}\label{eq:minimum_of_objective_function_with_infinite_eigenvalues}
    \begin{split}
        \min \left\{h\left(\left.\sum_{i\in I} p_i\lambda_{k_1}^{(i)}, \sum_{i\in I} p_i\lambda_{k_2}^{(i)}
        ,\dots, \sum_{i\in I} p_i\lambda_{k_n}^{(i)}\right)\right\vert k_1,k_2,\dots,k_n \in \mathbb{N} \right\}
    \end{split}
\end{equation}
since $\mu_1(W(p))$, $\mu_2(W(p))$, $\dots$, $\mu_n(W(p))$ are $n$ larger eigenvalues of $W(p)$, and vary when $\{p_i\}_{i=1}^\infty$ changes.
The convexity of this reformulated objective function
is not immediately apparent,
because the minimum of convex functions is not necessarily convex.
For example, $\min\{(x-1)^2,(x+1)^2\}$ is not convex even though both $(x-1)^2$ and
$(x+1)^2$ are convex.
As a result, it is difficult to prove the uniqueness of the optimal solution to Problems \eqref{problem:VCS_infinite} and \eqref{problem:AECS_infinite}.

To ensure the convexity of the objective function of Problems \eqref{problem:VCS_infinite} or \eqref{problem:AECS_infinite}, we introduce an additional assumption.

\begin{assumption}\label{ass:n-spectrum}
    For fixed $I\subset \mathbb{N}$, there exists $\{z_{\sigma(k)}\}_{k=1}^n\subset \{z_k\}_{k=1}^\infty$
    such that the restriction of $W_i$ to $\left(\Span\{z_{\sigma(k)}\}_{k=1}^n\right)^\bot$
    is $0$.
\end{assumption}

Assumption \ref{ass:n-spectrum} means that the eigenvalues of $W(p)$
are $\left\{\sum_{i\in I}p_i \lambda_{\sigma(k)}^{(i)}\right\}_{k=1}^n$ and $0$.
This is because $W_i$ can be written as 
\begin{equation}
    W_i z = \sum_{k=1}^n \lambda_{\sigma(k)}^{(i)}\langle z,z_{\sigma(k)}\rangle z_{\sigma(k)},
\end{equation}
and thus
\begin{equation}
    W(p)z = \sum_{i=1}^n p_i W_i
    = \sum_{k=1}^n \left(\sum_{i=1}^n p_i \lambda_{\sigma(k)}^{(i)}\right)\langle z,z_{\sigma(k)}\rangle z_{\sigma(k)}.
\end{equation}
This implies that $k_1,k_2,\dots,k_n$ in \eqref{eq:minimum_of_objective_function_with_infinite_eigenvalues} are fixed and the convexity of the objective function $h$ is guaranteed.

\begin{comment}
\begin{remark}
To guarantee the uniqueness of the optimal solution, we invoke Assumption \ref{ass:n-spectrum}.
However, 
under Assumptions \ref{ass:input_node}, \ref{ass:commute}, and \ref{ass:feasible_solution},
the number of the optimal solution to
Problems \eqref{problem:VCS_infinite} and \eqref{problem:AECS_infinite} is finite.
This is because the eigenvalues of compact operators converge to $0$; that is, for any $i\in I$,
$\lambda_{k}^{(i)} \rightarrow 0$ as $k\to \infty$.
Therefore, for any $i\in I$, 
there exists $\tilde{k}_i$ such that $k>k_i$ implies 
$\lambda_k^{(i)}<\lambda_{\tilde{k}_i}^{(i)}$.
Thus, if $k>\tilde{k} \coloneq \max_{i\in I}\tilde{k}_i$,
then $\sum_{i\in I}p_i\lambda_k^{(i)} < \sum_{i\in I}p_i\lambda_{\tilde{k}}^{(i)}$.
This implies $n$ larger eigenvalues are a subset of $\{\sum_{i\in I}p_i \lambda_k^{(i)}\}_{k=1}^{\tilde{k}}$.
Therefore, Problem \eqref{eq:minimum_of_objective_function_with_infinite_eigenvalues} is a collection of a finite number of optimization problems where the uniqueness of the optimal solution is guaranteed.
Therefore, the number of optimal solutions to the Problems \eqref{problem:VCS_infinite} and \eqref{problem:AECS_infinite} is finite.
\end{remark}
\end{comment}

We are now in a position to establish the uniqueness of VCS and AECS.
\begin{theorem}\label{thm:uniqueness_of_optimal_solution}
    Suppose that Assumptions \ref{ass:feasible_solution}, \ref{ass:input_node},  \ref{ass:commute} and \ref{ass:n-spectrum} hold.
    Then, the optimal solution to Problems \eqref{problem:VCS_infinite} and \eqref{problem:AECS_infinite} exists and is unique.
\end{theorem}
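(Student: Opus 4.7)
The approach is to split into existence and uniqueness, each handled by reducing to a finite-dimensional convex program.

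Existence is essentially immediate from Theorem \ref{thm:existence_of_optimal_solution}. Under Assumption \ref{ass:input_node}, the sequence $\{p_i\}_{i=1}^\infty$ is supported on the finite set $I$, so choosing $a_i = 1$ for $i \in I$ and $a_i = 0$ otherwise verifies Assumption \ref{ass:p_is_less_than_a}. Combined with Assumption \ref{ass:feasible_solution}, Theorem \ref{thm:existence_of_optimal_solution} then yields an optimal solution to both Problems \eqref{problem:VCS_infinite} and \eqref{problem:AECS_infinite}.

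For uniqueness, I would first reduce the problems to a finite-dimensional setting. By Assumption \ref{ass:commute} and Lemma \ref{lem:simultaneously_diagonalized}, the family $\{W_i\}_{i \in I}$ is simultaneously diagonalized in a common complete orthonormal system $\{z_k\}_{k=1}^\infty$ with eigenvalues $\lambda_k^{(i)} \geq 0$, so $W(p) = \sum_{i \in I} p_i W_i$ is diagonal in the same basis. Assumption \ref{ass:n-spectrum} then confines the support of every $W_i$ to the common $n$-dimensional subspace $V = \Span\{z_{\sigma(k)}\}_{k=1}^n$, so the only potentially nonzero eigenvalues of $W(p)$ are the affine expressions $\mu_k(p) \coloneq \sum_{i \in I} p_i \lambda_{\sigma(k)}^{(i)}$ for $k = 1,\ldots,n$. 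The objectives therefore become $f(p) = -\sum_{k=1}^n \log \mu_k(p)$ and $g(p) = \sum_{k=1}^n 1/\mu_k(p)$, each a function of the finitely many variables $(p_i)_{i \in I}$ on the convex set $\Delta \cap X \subset \mathbb{R}^{|I|}$.

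I would then argue uniqueness by strict convexity. Since $-\log$ and $1/x$ are strictly convex on $(0,\infty)$ and each $\mu_k(p)$ is a positive affine function on $X$, both $f$ and $g$ are convex; strict convexity along any tangent direction of the simplex then follows once one checks that the Hessian is positive definite on that subspace, which reduces to a spanning condition on the vectors $\{(\lambda_{\sigma(k)}^{(i)})_{i \in I}\}_{k=1}^n$ together with the all-ones vector. Uniqueness of the minimizer then follows from the standard convex-analytic fact that a strictly convex function admits at most one minimizer on a convex set.

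The step I anticipate as the main obstacle is verifying this spanning condition underlying strict convexity. Assumption \ref{ass:n-spectrum} constrains the ranges of the $W_i$ but does not directly supply the required rank of the coefficient matrix $[\lambda_{\sigma(k)}^{(i)}]$. I expect this to be resolved either by showing that a rank collapse would force $\mu_k(p) = 0$ for some $k$, contradicting $p \in X$, or by observing that the reduced problem is structurally an $n \times n$ diagonal instance of the finite-dimensional VCS/AECS problem, so the uniqueness result of \cite{satoterasaki2024} can be invoked directly.
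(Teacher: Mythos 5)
Your setup is the same as the paper's: reduce to the finitely many variables $(p_i)_{i\in I}$, use Assumptions \ref{ass:commute} and \ref{ass:n-spectrum} to write the nonzero eigenvalues of $W(p)$ as the affine forms $\sum_{i\in I}p_i\lambda_{\sigma(k)}^{(i)}$, and deduce uniqueness from strict convexity of $f$ and $g$ on the convex feasible set. You also correctly isolate the crux: strict convexity hinges on the linear independence of the $n$ vectors $\bigl(\lambda_{\sigma(1)}^{(i)},\dots,\lambda_{\sigma(n)}^{(i)}\bigr)^\top$, $i\in I$. But you leave exactly that step unproven, and neither of your two proposed escapes closes it. A rank collapse of the matrix $[\lambda_{\sigma(k)}^{(i)}]$ does \emph{not} force $\mu_k(p)=0$ for some $k$ (all rows could be equal and strictly positive, giving rank one with every $\mu_k(p)>0$), so it does not contradict $p\in X$. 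And the reduced problem is not literally an instance of the finite-dimensional VCS/AECS problem of \cite{satoterasaki2024} --- the $W_i$ here are simultaneously diagonalizable positive semidefinite operators arising from projections $P_i$ in an infinite-dimensional Lyapunov integral, not the diagonal-input Gramians of a finite-dimensional system --- so the uniqueness theorem there cannot simply be invoked.

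The paper closes the gap with an argument you did not anticipate. Suppose $\sum_{i\in I}x_i\lambda_{\sigma(k)}^{(i)}=0$ for all $k=1,\dots,n$. By Assumption \ref{ass:n-spectrum} each $W_i$ vanishes on $\bigl(\Span\{z_{\sigma(k)}\}_{k=1}^n\bigr)^\bot$, so this forces the operator identity $\sum_{i\in I}x_iW_i=0$, not merely the vanishing of finitely many scalars. Each $W_i$ satisfies the Lyapunov identity $\langle W_iz,A^*z\rangle+\langle A^*z,W_iz\rangle=-\langle P_iz,P_iz\rangle$ for $z\in\dom(A^*)$; multiplying by $x_i$, summing over $i$, and using $\sum_ix_iW_i=0$ yields $\sum_{i\in I}x_i\|P_iz\|^2=0$ on $\dom(A^*)$, hence on all of $\mathcal{H}$ by density. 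Taking $z=e_j$ gives $x_j=0$. This Lyapunov-equation step is the actual mathematical content of the uniqueness proof beyond the routine Hessian computation, and it is missing from your proposal.
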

\begin{proof}
    We have already proven the existence in Theorem \ref{thm:existence_of_optimal_solution}.
    Thus, we now prove the uniqueness.
    Since the feasible region is convex,
    it suffices to show that the objective functions $f$ and $g$ are strictly convex.
    Without loss of generality, it is enough to consider the case $I=\{1,2,\dots,n\}$.
    
    The Hessian matrices of $f$ and $g$ are given by
    \begin{equation*}%\label{eq:Hassian_f}
        \left(\nabla^2 f(p)\right)_{\alpha \beta} = \sum_{k=1}^n\frac{\lambda_{\sigma(k)}^\alpha \lambda_{\sigma(k)}^\beta}{\left(\sum_{i=1}^n p_i \lambda_{\sigma(k)}^{(i)}\right)^2},\quad \left(\nabla^2 g(p)\right)_{\alpha \beta} = 2\sum_{k=1}^n\frac{\lambda_{\sigma(k)}^\alpha \lambda_{\sigma(k)}^\beta}{\left(\sum_{i=1}^n p_i \lambda_{\sigma(k)}^{(i)}\right)^3},
    \end{equation*}
    % \begin{equation}\label{eq:Hessian_g}
    %     \left(\nabla^2 g(p)\right)_{\alpha \beta} = 2\sum_{k=1}^n\frac{\lambda_{\sigma(k)}^\alpha \lambda_{\sigma(k)}^\beta}{\left(\sum_{i=1}^n p_i \lambda_{\sigma(k)}^{(i)}\right)^3},
    % \end{equation}
    respectively. Thus, for $x\in\mathbb{R}^n$,
    \begin{equation}\label{eq:Hessian_product_f}
        x^\top \nabla^2 f(p)x = \sum_{k=1}^n \frac{1}{\left(\sum_{i=1}^n p_i\lambda_{\sigma(k)}^{(i)}\right)^2}(x^\top \lambda_{\sigma(k)})^2\geq0,
    \end{equation}
    \begin{equation}\label{eq:Hessian_product_g}
        x^\top \nabla^2 g(p)x = 2\sum_{k=1}^n \frac{1}{\left(\sum_{i=1}^n p_i\lambda_{\sigma(k)}^{(i)}\right)^3}(x^\top \lambda_{\sigma(k)})^2\geq0,
    \end{equation}
    which implies the convexity of $f$ and $g$, where
    $
        \lambda_{\sigma(k)} = 
        \begin{pmatrix}
            \lambda_{\sigma(k)}^{(1)}&
            \lambda_{\sigma(k)}^{(2)}&
            \cdots&
            \lambda_{\sigma(k)}^{(n)}
        \end{pmatrix}^\top.
    $
    
    Next, we show the strict convexity of $f$ and $g$.
    It is sufficient to show that if
    $x^\top\nabla^2 f(p)x = 0$ or $x^\top \nabla^2 g(p) x = 0$,
    then $x=0$. Note that if
    $x^\top\nabla^2 f(p)x = 0$ or $x^\top \nabla^2 g(p) x = 0$, then $x^\top \lambda_{\sigma(k)} = 0$ for $k=1,2,\dots,n$. This condition can be written in matrix form as
    \begin{equation}
    \begin{pmatrix}
        \lambda_{\sigma(1)}^{(1)} & \lambda_{\sigma(1)}^{(2)} & \cdots & \lambda_{\sigma(1)}^{(n)}\\
        \lambda_{\sigma(2)}^{(1)} & \lambda_{\sigma(2)}^{(2)} & \cdots & \lambda_{\sigma(2)}^{(n)}\\
        \vdots & \vdots & \ddots & \vdots\\
        \lambda_{\sigma(n)}^{(1)} & \lambda_{\sigma(n)}^{(2)} & \cdots & \lambda_{\sigma(n)}^{(n)}
    \end{pmatrix}
    \begin{pmatrix}
        x_1 \\ x_2 \\ \vdots \\ x_n
    \end{pmatrix}
    = 0.
    \end{equation}
    Therefore, to conclude that $x=0$, it is sufficient to show 
    $
    \begin{pmatrix}
        \lambda_{\sigma(1)}^{(i)}&
        \lambda_{\sigma(2)}^{(i)}&
        \cdots&
        \lambda_{\sigma(n)}^{(i)}&
    \end{pmatrix}^\top$
    $\textrm{$(i=1,2,\dots,n)$}
    $
    are linearly independent.
    
    Suppose
    \begin{equation}
        x_1
    \begin{pmatrix}
        \lambda_{\sigma(1)}^{(1)}\\
        \lambda_{\sigma(2)}^{(1)}\\
        \vdots\\
        \lambda_{\sigma(n)}^{(1)}\\
    \end{pmatrix}
    + x_2
    \begin{pmatrix}
        \lambda_{\sigma(1)}^{(2)}\\
        \lambda_{\sigma(2)}^{(2)}\\
        \vdots\\
        \lambda_{\sigma(n)}^{(2)}\\
    \end{pmatrix}
    +\cdots+x_n
    \begin{pmatrix}
        \lambda_{\sigma(1)}^{(n)}\\
        \lambda_{\sigma(2)}^{(n)}\\
        \vdots\\
        \lambda_{\sigma(n)}^{(n)}\\
    \end{pmatrix}
    = 0.
    \end{equation}
    Then, 
    $\sum_{i=1}^n x_i \lambda_{\sigma(k)}^{(i)}=0$ for $k=1,2,\dots,n$,
    and this implies that the controllability Gramian satisfies 
    \begin{equation}\label{eq:sum_of_component_Gramian_x_i}
        W(x)=\sum_{i=1}^n x_i W_i = 0.
    \end{equation}
    According to \cite[Theorem 6.5.3]{CurtainZwart2020}, it holds
    \begin{equation}\label{eq:Lyapunov_equation_projection_case}
         \langle x_iW_i z,A^* z\rangle + \langle A^*z,x_iW_iz\rangle = -x_i\langle P_i z,P_iz\rangle
    \end{equation}
    for $z\in \dom(A^*)$, where $P_i$ is the orthogonal projection defined by $P_i z = \langle z,e_i\rangle e_i$.
    Taking the sum over $i=1,2,\dots,n$ in \eqref{eq:Lyapunov_equation_projection_case} and using \eqref{eq:sum_of_component_Gramian_x_i}, we obtain
    \begin{equation}
        0 = -\sum_{i=1}^n x_i\langle P_iz, P_iz\rangle \quad \textrm{for $z\in\dom(A^*)$}.
    \end{equation}
    Moreover, since $A$ is an infinitesimal generator of the strongly continuous semigroup, $A$ is a densely defined closed operator \cite[Theorem 2.1.13]{CurtainZwart2020}, and thus, $\dom(A^*)$ is dense in $\mathcal{H}$ \cite[section VII.2, Theorem 3]{yosida2012functional}.
    Hence, for any $z\in \mathcal{H}$, there exists a sequence $\{z_j\}_{j=1}^\infty\subset\dom(A)$ such that $z_j \rightarrow z$.
    Passing to the limit, we deduce that
    \begin{equation}
        \begin{split}
            \sum_{i=1}^n x_i \langle P_iz,P_iz\rangle
            &= \lim_{j\to\infty}\sum_{i=1}^\infty x_i \langle P_iz_j,P_iz_j\rangle
            = 0.
        \end{split}
    \end{equation}
    Thus,
    \begin{equation}\label{eq:sum_projection_e_i}
        \sum_{i=1}^n x_i \langle P_iz,P_iz\rangle = 0 \quad \text{for any $z\in\mathcal{H}$}.
    \end{equation}
    By choosing $z = e_j$ in \eqref{eq:sum_projection_e_i} and noting that $\langle P_i e_j,P_ie_j\rangle = \delta_{ij}$, where $\delta_{ij}$ is Kronecker delta, we obtain $x_j = 0$ for $j=1,2,\dots,n$, which in turn establishes the strict convexity of the objective functions $f$ and $g$.
\end{proof}

Theorem \ref{thm:uniqueness_of_optimal_solution} shows that
VCS and AECS 
 are unique, which supports their use as centrality measures.

We note that Assumptions \ref{ass:feasible_solution}, \ref{ass:input_node},  \ref{ass:commute} and \ref{ass:n-spectrum} do not mean the controllability of the system.

\begin{definition}[Approximately controllable, {\cite[Definition 6.2.18]{CurtainZwart2020}}]

The reachability subspace of system \eqref{eq:state_equation_infinite_with_input_diagonal_B} is defined as
\begin{equation}
    \begin{split}
        \mathcal{R} = \{ z\in\mathcal{H} &\mid \textrm{there exist $T>0$ and $u\in L^2([0,T];H)$}\\ 
        &\textrm{such that $z = \int_0^T \exp((T-t)A)Bu(t)$}\mathrm{d}t \}.
    \end{split}
\end{equation}
System \eqref{eq:state_equation_infinite_with_input_diagonal_B}
is called approximately controllable in infinite time if $\overline{\mathcal{R}} = \mathcal{H}$.
\end{definition}

In the finite-dimensional case, it was assumed that  system \eqref{eq:input_network} is controllable. 
However, in the infinite-dimensional case, under Assumptions \ref{ass:feasible_solution}, \ref{ass:input_node}, \ref{ass:commute}, and \ref{ass:n-spectrum}, we cannot determine that  system \eqref{eq:state_equation_infinite_with_input_diagonal_B} is approximately controllable or not.
In fact, according to \cite[Theorem 6.4.5]{CurtainZwart2020}, system \eqref{eq:state_equation_infinite_with_input_diagonal_B} is approximately controllable in infinite time if and only if
\begin{equation}\label{eq:equivalent_form_approximately_controllable}
        B^*\exp(tA)^* z = 0 \textrm{ for all $t\geq 0$} \Rightarrow z = 0.
\end{equation}
This condition, however, is not necessarily satisfied in the case considered in this paper.
For example, if $A$ is self-adjoint and $\{e_i\}_{i=1}^\infty$ are eigenvectors of $A$,
Assumptions \ref{ass:feasible_solution}, \ref{ass:input_node}, \ref{ass:commute}, and \ref{ass:n-spectrum} hold, but condition \eqref{eq:equivalent_form_approximately_controllable} does not hold for $e_i$ with $i\notin I$ because $B^*\exp(tA)^*e_i = e^{\lambda_i t}Be_i = 0$, but $e_i\neq 0$.

%%%%%%%%%%%%%%%%%%%%%%%%%%%%%%%%%%%%%%%%%%%%%%%%%%%%%%%%%%%%%%%%%%%%%%%%
\subsection{VCS for a special case}

In this section, we assume Assumptions \ref{ass:feasible_solution}, \ref{ass:input_node},
\ref{ass:commute}, and \ref{ass:n-spectrum}
hold, and use same notation as in Section \ref{sec:definition_of_controllability_score}.

We assume $A$ is self-adjoint and $\{e_i\}_{i=1}^\infty$ is a complete orthonormal system which satisfies $Ae_i = \lambda_i e_i$ for all $i\in\mathbb{N}$.
Then, $\lambda_i\in\mathbb{R}$ and 
$\exp(tA)$ has the form 
\begin{equation}
    \exp(tA)z = \sum_{i=1}^\infty e^{\lambda_i t} \langle z,e_i\rangle e_i.
\end{equation}
Since $A$ is exponentially stable, $\lambda_i < 0$ for all $i\in\mathbb{N}$.
Then, the controllability Gramian has the following form.
\begin{equation}
    \begin{split}
        W(p)z 
        &= \int_0^\infty \exp(tA)B B^*\exp(tA)^* z\mathrm{d}t \\
        &= \int_0^\infty \sum_{i\in I} p_i e^{2\lambda_i t}\langle z,e_i\rangle e_i \mathrm{d}t
        = \sum_{i\in I} \left(\frac{p_i}{-2\lambda_i}\right) \langle z,e_i\rangle e_i
    \end{split}
\end{equation}
Thus, $n$ larger eigenvalues of $W(p)$ are 
$\left\{\frac{p_i}{-2\lambda_i}\right\}_{i\in I}$, and the objective function of VCS is 
\begin{equation}\label{eq:VCS_symmetry}
    \begin{split}
        f(p) 
        = \prod_{i\in I}\log \left(\frac{p_i}{-2\lambda_i}\right)
        = \log \prod_{i\in I} p_i - \log \prod_{i\in I}(-2\lambda_i).
    \end{split}
\end{equation}
Equation \eqref{eq:VCS_symmetry} implies that these eigenvalues have the equivalent effect to the objective 
function and thus VCS is $\frac{1}{n}$ for all $i\in I$.

Therefore, this special case can be considered an extension to
the infinite-dimensional setting,
since in the finite -dimensional setting,
VCS takes the same value for all nodes
if $A$ is symmetric \cite{SatoKawamura2024}.

\subsection{Remark}

In Section \ref{sec:definition_of_controllability_score},
we extended the definition of the controllability scores to infinite-dimensional systems.
In Section \ref{sec:existence_of_VCS_and_AECS}, we proved the existence, and in Section \ref{sec:uniqueness_of_VCS_and_AECS}, we established their uniqueness.
In these sections, the state space $\mathcal{H}$ was assumed to be infinite dimensional, but the arguments in these sections
remain valid even if $\mathcal{H}$ is finite dimensional.
Moreover, when $\dim \mathcal{H} = n$ and the number of evaluating state nodes is $n$, the result on the existence and uniqueness of the controllability scores in Sections \ref{sec:existence_of_VCS_and_AECS} and \ref{sec:uniqueness_of_VCS_and_AECS} is consistent with the findings in \cite{satoterasaki2024}.
Moreover, we proved the existence of the controllability score in infinite-dimensional systems under Assumption \ref{ass:p_is_less_than_a}---which is always satisfied in the finite-dimensional cases---and Assumption \ref{ass:feasible_solution}, which is also assumed in the finite-dimensional setting \cite{satoterasaki2024}.
However, there is a difference on the uniqueness result between our work in Section \ref{sec:uniqueness_of_VCS_and_AECS} in this paper
and the results in \cite{satoterasaki2024}.
In fact, Assumptions \ref{ass:input_node} and \ref{ass:n-spectrum} in Section \ref{sec:uniqueness_of_VCS_and_AECS} are always satisfied when $\dim \mathcal{H}<\infty$.
However, Assumption \ref{ass:commute} in Section \ref{sec:uniqueness_of_VCS_and_AECS}---which plays a role in the infinite-dimensional case---is not imposed  in the finite-dimensional case \cite{satoterasaki2024}.
Moreover, the stability of $A$ is one of the cases discussed in \cite{satoterasaki2024}.
Therefore, the results in Section \ref{sec:uniqueness_of_VCS_and_AECS}
can be regarded as a partial extension of the existence results for
the controllability scores to the infinite-dimensional setting.

%%%%%%%%%%%%%%%%%%%%%%%%%%%%%%%%%%%%%%%%%%%%%%%%%%%%%%%%%%%%%%%%%%%%%%%%%

\section{Numerical Experiments}\label{sec:numerical_experiments}
In this section, we provide an application of the controllability scores to an infinite-dimensional system.
Before presenting examples, we emphasize an important caution.
In the previous section, we showed the uniqueness of the scores under Assumptions \ref{ass:feasible_solution}, \ref{ass:input_node},  \ref{ass:commute} and \ref{ass:n-spectrum}.
However, especially Assumptions \ref{ass:commute} and \ref{ass:n-spectrum} are strong. 
These two assumptions are closely related to the selection of state nodes.
If the state nodes to be evaluated are changed,
these assumptions may no longer hold, even if the other state nodes satisfy them.
Thus, ensuring the uniqueness of the controllability scores
requires careful selection not only of the structure $A$,
but also of the state nodes$\{e_i\}$.
For example, if $A$ can be expressed as $A = \sum a_i \langle \cdot, e_i\rangle e_i$
where $\{e_i\}$ are orthonormal eigenvectors of $A$,
choosing $\{e_i\}$ as the state nodes guarantees that
Assumptions \ref{ass:commute} and \ref{ass:n-spectrum} hold.
However, cases where $A$ and $\{e_i\}$ satisfy Assumptions \ref{ass:commute} and \ref{ass:n-spectrum} do not occur often.
Moreover, even if Assumptions \ref{ass:commute} and \ref{ass:n-spectrum} hold, the state nodes $\{e_i\}$ may not have physical relevance or importance from a system control perspective.
The following example illustrates such a case, where Assumptions \ref{ass:feasible_solution}, \ref{ass:input_node},  \ref{ass:commute} and \ref{ass:n-spectrum} hold but assigning physical meaning may be challenging.

\begin{comment}
Therefore, we present two examples in this section.
The first satisfies all the assumptions in Section \ref{sec:definition_of_controllability_score}
but it is not physically realistic.
While it may lack physical relevance, this example ensures the uniqueness of the scores
allows for explicit calculation of the objective functions.
The second example does not satisfy Assumption \ref{ass:commute} and Assumption \ref{ass:n-spectrum},
but it is more physically realistic than the first example. 
As we demonstrate later, controllability scores in this example 
yield the same value
for several initial values.
Thus, although the uniqueness is not analytically guaranteed,
this approach is still practically applicable.
\end{comment}

%\subsection{An example uniqueness is %guaranteed}\label{subsec:Example_with_uniqueness}
Now, we consider the following heat equation with Dirichlet boundary conditions.
\begin{equation}\label{eq:heat_equation_Dirichlet}
    \begin{split}
        &\frac{\partial u}{\partial t} = \frac{\partial^2 u}{\partial x^2}, \quad \textrm{$t>0$, $x\in(0,1)$},\\
        &u(t,0) = u(t,1) = 0, \quad \textrm{$t>0$}.
    \end{split}
\end{equation}
This can be interpreted as a metal rod with a heat bath at both ends 
and a fixed temperature of 0 at each end.
Here, $u(t,x)$ represents the temperature at position $x$ and time $t$.
Thus, controlling the system means controlling the temperature distribution
along the rod.

To formulate this as an equation of the form $\dot{u}(t) = Au(t)$ in $L^2(0,1)$,
we introduce a linear operator $A$ defined as 
\begin{equation}\label{eq:definition_Dirichlet_Lapracian}
        Au = \frac{\mathrm{d}^2u}{\mathrm{d}x^2}, \quad \textrm{$u\in \dom(A) = H_0^1(0,1) \cap H^2(0,1)$},
\end{equation}
where $H^2(0,1)$ and $H_0^1(0,1)$ are Sovolev spaces.
Thus, equation (\ref{eq:heat_equation_Dirichlet}) can be expressed as
\begin{equation}
    \dot{u}(t) = Au(t), \quad \textrm{$u\in\dom(A)$}.
\end{equation}
Then, $A$ generates a semigroup expressed as
\begin{equation}\label{eq:spectral_decomposition_of_exp_tA}
    \exp(tA)u = \sum_{k=1}^\infty e^{-\pi^2 k^2 t}\langle u,e_k\rangle e_k,
\end{equation}
where $e_k = \sqrt{2}\sin(k\pi x)$.
This is because $e_k\in \dom(A)$, $Ae_k = -\pi^2k^2 e_k$
and $\{e_k\}_{k=1}^\infty$ forms a complete orthonormal system in $L^2(0,1)$.
Therefore, we choose controlling nodes as $\{e_k\}_{k=1}^\infty$.
This is an example in which uniqueness of the scores is guaranteed.

We first define $P_k$ as $P_k u = \langle u,e_k\rangle e_k$.
To calculate the controllability Gramian, we compute
$W_k = \int_0^\infty \exp(tA) P_k P_k^* \exp(tA)^*\mathrm{d}t$.
In this case, $\exp(tA)^* = \exp(tA)$, because $A$ is self-adjoint.
Thus, $\exp(tA)P_kP_k^* \exp(tA)^* e_l = e^{-2\pi^2 l^2 t}\delta_{kl}e_l$ and 
\begin{equation}
    \begin{split}
        W_k e_l
        = \int_0^\infty \exp(tA)P_k P_k^* \exp(tA)^* e_l \mathrm{d}t
        = \int_0^\infty e^{-2\pi^2 l^2 t}\delta_{kl}e_l \mathrm{d}t
        = \frac{\delta_{kl}}{2\pi^2 l^2}e_k.
    \end{split}
\end{equation}
Here, $\delta_{kl}$ is Kronecker delta.
This implies that
\begin{equation}
    W_k u = \frac{1}{2\pi^2k^2}\langle u,e_k\rangle e_k.
\end{equation}
Since $\{e_k\}_{k=1}^\infty$ is orthonormal, we have
\begin{equation}
        W_k W_l u
        = W_k\left(\frac{1}{2\pi^2l^2}\langle u,e_l\rangle e_l\right)
        = \frac{1}{4\pi^4 k^2l^2}\langle u,e_l\rangle \langle e_l,e_k\rangle e_k
        = \frac{\delta_{kl}}{4\pi^4 k^2l^2} \langle u,e_l\rangle e_k.
\end{equation}
This implies that $W_kW_l = W_l W_k$,
so the set $\{W_k\}_{k=1}^\infty$ is commutative
and Assumption \ref{ass:commute} holds.
Assumption \ref{ass:n-spectrum} is also satisfied
because the restriction of $W_k$ to $\left(\Span\{e_k\}\right)^\bot$ is zero.
Therefore, the restriction of $\{W_k\}_{k\in I}$ to $\left(\Span\{e_k\}_{k\in I}\right)^\bot$ is also zero.
Moreover, it is straightforward to verify that
the other assumptions are satisfied as well.
Thus, the uniqueness of controllability score is guaranteed in this case.

If we want to evaluate nodes corresponding to 
a set $I\subset \mathbb{N}$, which contains $n$ elements,
then controllability Gramian $W(p) = \sum_{k\in I} p_k W_k$ satisfies
\begin{equation}
    W(p)u = \sum_{k\in I} \frac{p_k}{2k^2\pi^2}\langle u,e_k\rangle e_k.
\end{equation}

From the form, $n$ largest eigenvalues of $W(p)$
with multiplicity are given by
$\left\{\frac{p_k}{2k^2\pi^2}\right\}_{k\in I}$.
Therefore, the objective function of VCS is 
\begin{equation}\label{eq:objective_function_VCS_sin}
    f(p) = -\log\left(\prod_{k\in I} \frac{p_k}{2\pi^2k^2}\right)
    =-\log\left(\prod_{k\in I} p_k\right) + \log\left(\prod_{k\in I}2\pi^2k^2\right),
\end{equation}
and the objective function of AECS is 
\begin{equation}\label{eq:objective_function_AECS_sin}
    g(p) = \sum_{k\in I}\frac{2\pi^2k^2}{p_k}.
\end{equation}
Equation \eqref{eq:objective_function_VCS_sin} implies that all $\{p_k\}_{k\in I}$ have the same influence on $f(p)$,
and $f(p)$ achieves its maximum value when $p_k = \frac{1}{n}$ for all $k\in I$.
In this case, VCS is same for all nodes.
On the other hand, AECS indicates that each node has a different importance.
Table (\ref{table:heat_eq_AECS_sin_result}) presents the result.
AECS gives more weight to the impact of large $k$ values.

\begin{table}[H]
    \centering
    \caption{AECS of $e_k = \sin k\pi x$. Evaluate $e_k$ for $k\in I$. 
    The position of the left nodes and the right scores correspond}\label{table:heat_eq_AECS_sin_result}
    \begin{tabular}{c|c}
        $I$ & AECS\\\hline
        $\{1, 2, 3, 4\}$ & $(0.10, 0.20, 0.30, 0.40)$\\
        $\{1, 2, 3, 5\}$ & $(0.09, 0.18, 0.27, 0.45)$\\
        $\{1, 2, 3, 6\}$ & $(0.08, 0.16, 0.25, 0.50)$\\
        $\{2, 3, 4, 5\}$ & $(0.14, 0.21, 0.28, 0.35)$\\
        $\{2, 3, 4, 6\}$ & $(0.13, 0.20, 0.26, 0.40)$\\
        $\{3, 4, 5, 6\}$ & $(0.16, 0.22, 0.27, 0.33)$\\
    \end{tabular}
\end{table}

In the above experiment,
$\sin(k\pi x)$ represents a frequency component that is difficult to handle in practical control applications.
Moreover, in general, frequency components with large $k$
are considered noise. 
However, while VCS treats all components as equivalent,
 AECS assigns higher importance to components with larger $k$.
This discrepancy does not align with our intuition and makes it challenging to interpret the scores meaningfully.

\section{Concluding remarks}
\label{sec:conclusion}

In this research, we extended the definitions of the controllability scores---namely, the Volumetric Controllability Score (VCS) and the Average Energy Controllability Score (AECS)---to infinite-dimensional systems and proved their existence under weak assumptions, and uniqueness under certain assumptions. Moreover, we demonstrated that VCS produces a distinctive outcome when the operator $A$
is self-adjoint, possesses a complete orthonormal system of eigenvectors, and the controlling nodes are chosen from this system.
We also conducted a numerical experiment applying the controllability score to the heat equation on a unit interval with Dirichlet boundary conditions. In this experiment, all the necessary assumptions are satisfied, ensuring uniqueness; however, interpreting the result in a meaningful way remains challenging.

To prove the uniqueness of the scores, we had to impose strong assumptions that, among other effects, restrict the number of nonzero eigenvalues. In cases where the spectrum is infinite, the objective functions are defined as the minimum over a family of convex functions, and the overall convexity of these functions is not immediately apparent. Since these strong assumptions are restrictive, only a limited class of systems satisfy them. Although we were able to establish the existence of an optimal solution without these stringent assumptions, the uniqueness of this solution under weaker or no assumptions remains unproven. In other words, demonstrating the uniqueness of the optimal solution without relying on these restrictive conditions is an open problem---a challenge that we intend to address in future work.

\bibliographystyle{siamplain}
\bibliography{references}

%%%%%%%%%%%%%%%%%%%%%%%%%%%%%%%%%%%%%%%%%%%%%%%%%%%%%%%%%%%%%%%%%%%%%%%%%
%%%%%%%%%%%%%%%%%%%%%%%%%%%%%%%%%%%%%%%%%%%%%%%%%%%%%%%%%%%%%%%%%%%%%%%%%
\appendix

\section{Proof of Theorem \ref{thm:existence_of_optimal_solution}}\label{sec:proof_existence_optimal_solution}

In this section, we prove Theorem \ref{thm:existence_of_optimal_solution}.

Define $S$ as 
\begin{equation}
    S \coloneq  \left\{ \{b_i\}_{i=1}^\infty \left\vert 0\leq b_i\leq a_i, \sum_{i=1}^\infty b_i = 1\right.\right\},
\end{equation}
where $\{a_i\}_{i=1}^\infty$ is a sequence defined in Assumption \ref{ass:p_is_less_than_a}.
Then, the following lemma holds.

\begin{lemma}\label{lem:compactness_of_l1_subset}
$S$ is a compact subset of $\ell^1(\mathbb{N})$.
\end{lemma}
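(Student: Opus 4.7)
The plan is to prove sequential compactness of $S$ in $\ell^1(\mathbb{N})$, which suffices since $\ell^1(\mathbb{N})$ is a metric space. Take an arbitrary sequence $\{b^{(k)}\}_{k=1}^\infty \subset S$ with $b^{(k)} = (b_1^{(k)}, b_2^{(k)}, \ldots)$. The goal is to extract a subsequence converging in $\ell^1$ to some element of $S$.

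First I would use a standard diagonal extraction argument. For each fixed $i$, the scalar sequence $\{b_i^{(k)}\}_{k=1}^\infty$ lies in $[0, a_i]$, hence is bounded and has a convergent subsequence by Bolzano--Weierstrass. Applying this coordinate by coordinate and diagonalizing, I obtain a subsequence $\{b^{(k_j)}\}_{j=1}^\infty$ and a sequence $b = (b_1, b_2, \ldots)$ such that $b_i^{(k_j)} \to b_i$ as $j \to \infty$ for every $i \in \mathbb{N}$. Clearly $0 \leq b_i \leq a_i$ for every $i$.

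Next I would upgrade coordinatewise convergence to $\ell^1$ convergence and check the sum constraint. The key observation is that $|b_i^{(k_j)} - b_i| \leq a_i$ for all $i,j$, and $\sum_{i=1}^\infty a_i < \infty$ by Assumption \ref{ass:p_is_less_than_a}. Treating the sequences as functions on $\mathbb{N}$ equipped with counting measure, the dominated convergence theorem yields
\begin{equation}
\lim_{j\to\infty} \sum_{i=1}^\infty |b_i^{(k_j)} - b_i| = \sum_{i=1}^\infty \lim_{j\to\infty} |b_i^{(k_j)} - b_i| = 0,
\end{equation}
so $b^{(k_j)} \to b$ in $\ell^1(\mathbb{N})$. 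The same dominated convergence argument applied to $b_i^{(k_j)}$ itself gives $\sum_{i=1}^\infty b_i = \lim_{j\to\infty} \sum_{i=1}^\infty b_i^{(k_j)} = 1$, so $b \in S$.

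The proof is essentially routine; the only real issue is making sure one cannot just use coordinatewise (equivalently, weak-$*$-style) convergence, which would not suffice for compactness in the $\ell^1$ norm topology. The role of the dominating sequence $\{a_i\}$ from Assumption \ref{ass:p_is_less_than_a} is precisely to rule out escape of mass to infinity and to supply the hypothesis needed for dominated convergence, turning pointwise into norm convergence. This is the crucial input and is what distinguishes $S$ from the full probability simplex in $\ell^1$, which is not compact.
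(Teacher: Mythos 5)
Your proof is correct and follows essentially the same route as the paper's: coordinatewise Bolzano--Weierstrass plus a diagonal extraction, then the dominated convergence theorem with dominating sequence $\{a_i\}$ to upgrade pointwise convergence to $\ell^1$ convergence. You additionally spell out the verification that the limit satisfies $\sum_i b_i = 1$ (which the paper leaves as ``straightforward''), but there is no substantive difference in approach.
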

\begin{proof}
    Let $\{\beta^{(n)}\}_{n=1}^\infty$ be a sequence in $S$, where $\beta^{(n)} \coloneq \{b_i^{(n)}\}_{i=1}^\infty \in S$.
    Since $0\leq b^{(n)}_1\leq a_1$ holds for any $n\in\mathbb{N}$, the sequence $\{b_1^{(n)}\}_{n=1}^\infty$ has a convergent subsequence $\{b^{(1_n)}_1\}_{n=1}^\infty$.
    Similary, since $0\leq b^{(1_n)}_2 \leq a_2$, $\{b^{(1_n)}_2\}_{n=1}^\infty$ also has
    a convergent subsequence $\{b^{(2_n)}_2\}_{n=1}^\infty$.
    Repeating this procedure for each coordinate and applying the diagonal argument,
    we obtain a subsequence $\{\beta^{(n_n)}\}_{n=1}^\infty$.
    For all $i\in\mathbb{N}$, the sequence $\{b^{(n_n)}_i\}_{n=1}^\infty$ converges,
    and $0\leq b^{(n_n)}_i\leq a(i)$ holds.
    Define $c_i\coloneq \lim_{n\to\infty}b^{(n_n)}_i$.
    Since $0\leq b^{(n_n)}_i\leq a_i$, it follows that
    $|b^{(n_n)}_i - c_i|\leq a_i$.
    Moreover, since $\sum_{i=1}^\infty a_i <\infty$,
    we can apply Lebesgue convergence theorem.
    This yields
    $\lim_{n\to\infty} \sum_{i=1}^\infty |b^{(n_n)}_i - c_i| = 0$,
    which implies that the subsequence $\{\beta^{(n_n)}\}_{n=1}^\infty$ converges to $c$ with respect to $\ell^1$ norm.
    Furthermore, it is straightforward to verify that
    $\{c_i\}_{i=1}^\infty \in S$.
    Thus, $\{\beta^{(n)}\}_{n=1}^\infty$ has a subsequence that converges in $S$. 
    Therefore, $S$ is compact.
\end{proof}

\begin{lemma}
    Under Assumption \ref{ass:p_is_less_than_a}, the feasible region $\mathcal{F}$ is defined as 
    \begin{equation}
        \mathcal{F} \coloneq S \cap X,
    \end{equation}
    where
    \begin{equation}
        X = \{\{p_i\}_{i=1}^\infty \mid \mu_n(W(p))>0\}.
    \end{equation}
    Then $\mathcal{F}$ is a convex set.
\end{lemma}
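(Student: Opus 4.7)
The plan is to show $\mathcal{F}$ is the intersection of two convex sets, $S$ and $X$. Convexity of $S$ is immediate since it is cut out by the affine equation $\sum_i b_i = 1$ and the box constraints $0 \le b_i \le a_i$, all of which are preserved under convex combinations. The main work is showing convexity of $X$, for which the crucial structural observation is that $W(p)$ depends linearly on $p$: starting from the definition $Be_i = \sqrt{p_i}e_i$, one has $BB^* = \sum_{i=1}^\infty p_i P_i$, and hence
\begin{equation}
W(p) = \sum_{i=1}^\infty p_i W_i,
\end{equation}
where $W_i$ is the single-node Gramian from Definition \ref{def_Wi}. In particular, $W(\lambda p + (1-\lambda)q) = \lambda W(p) + (1-\lambda) W(q)$ for any $\lambda \in [0,1]$.

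Given $p,q \in X$ and $\lambda \in (0,1)$, write $r = \lambda p + (1-\lambda)q$. Since every $W_i$ is self-adjoint and positive semidefinite, so is $W(q)$, and therefore
\begin{equation}
\lambda W(p) \preceq \lambda W(p) + (1-\lambda) W(q) = W(r)
\end{equation}
in the sense of the operator partial order on compact self-adjoint operators. By the Courant–Fischer min–max characterization of eigenvalues of compact self-adjoint operators, the $n$-th largest eigenvalue $\mu_n(\cdot)$ is monotone with respect to this partial order, and it is positively homogeneous, so
\begin{equation}
\mu_n(W(r)) \ge \mu_n(\lambda W(p)) = \lambda\, \mu_n(W(p)) > 0,
\end{equation}
which shows $r \in X$. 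The boundary cases $\lambda \in \{0,1\}$ are trivial since then $r$ equals $q$ or $p$.

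Combining the two convexity facts, $\mathcal{F} = S \cap X$ is convex as the intersection of two convex subsets of $\ell^1(\mathbb{N})$. The only step that is not essentially formal bookkeeping is the monotonicity of $\mu_n$ under the PSD order in the infinite-dimensional setting; I would invoke the standard min–max principle for compact self-adjoint operators (applicable here since $W(p)$ is compact by Lemma \ref{lem:compactness_of_controllability_Gramian}) rather than reproving it. No continuity of the eigenvalue functions in $p$ is needed for this lemma — that is deferred to the existence proof that follows.
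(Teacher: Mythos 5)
Your proposal is correct and follows essentially the same route as the paper: convexity of $S$ is dispatched as immediate, and convexity of $X$ is obtained from the linearity of $p\mapsto W(p)$ together with the Courant--Fischer max--min characterization of $\mu_n$ for compact self-adjoint operators. The only cosmetic difference is that you package the min--max step as monotonicity of $\mu_n$ under the operator order plus positive homogeneity, whereas the paper unfolds it by fixing the optimal $n$-dimensional subspace $M$ for $W(p)$ and bounding $\langle W((1-\theta)p+\theta q)x,x\rangle$ from below; both yield the same inequality $\mu_n(W(r))\geq(1-\theta)\mu_n(W(p))>0$.
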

\begin{proof}
    Since the convexity of $S$ is straightforward, we show the convexity of $X$.
    Then, it suffices to show that $\mu_n(W((1-\theta)p + \theta q))>0$ for any $\theta\in(0,1)$ and $p,q\in X$.
    Using the max-min Theorem (\cite{lax2014functional}, Chapter 28, Theorem 4), we obtain a $n$-dimensional subspace $M$ which satisfies
    \begin{equation}
        \mu_n(W(p)) = \min_{x\in M, \|x\| = 1}\langle W(p)x,x\rangle.
    \end{equation}
    Thus, 
    \begin{equation}
        \begin{split}
            \mu_n(W((1-\theta)p + \theta q))
            &\geq \min_{x\in M,\|x\|=1} \langle W((1-\theta)p+\theta q)x,x\rangle\\
            &\geq (1-\theta)\min_{x\in M,\|x\|=1} \langle W(p)x,x\rangle + \theta\min_{x\in M,\|x\|=1} \langle W(q)x,x\rangle\\
            &\geq (1-\theta)\mu_n(W(p))
            >0,
        \end{split}
    \end{equation}
    since $W(q)$ is positive semidefinite and thus $\langle W(q)x,x\rangle\geq 0$.
    This implies $(1-\theta)p+\theta q\in X$.
\end{proof}

In the proof of Theorem \ref{thm:existence_of_optimal_solution}, we use the notation of $B(p)$, where $p = \{p_i\}_{i=1}^\infty$ and $B(p)$ is a linear operator defined as $B(p)e_i = \sqrt{p_i}e_i$.

\begin{proof}[Proof of Theorem \ref{thm:existence_of_optimal_solution}]
    We first show $f$ and $g$ are continuous on $\mathcal{F}$.
    It suffices to demonstrate $\mu_k(W(\cdot )): \mathcal{F}\to \mathbb{R}$ is continuous.
    Let $\{p^{(m)}\}_{m=1}^\infty \subset \mathcal{F}$ be a sequence that converges to $p^{(\infty)}\in \mathcal{F}$
    with respect to $\ell^1$ norm.
    Then, $B(p^{(m)})$ converges to $B(p^{(\infty)})$ with respect to the operator norm.
    In fact, since $B(p)$ is self-adjoint, $\|B(p)\|$ equals to the largest spectrum of $B(p)$, which is $\max_i \sqrt{p_i}$.
    Moreover, since $\max_i |p(i)| \leq \sum_{i=1}^\infty |p(i)|$,
    the convergence of $p^{(m)}$ to $p^{(\infty)}$ with respect to the $\ell^1$ norm
    implies that $B(p^{(m)})$ converges to $B(p^{(\infty)})$.
    In addition, $W(p^{(m)})$ converges to $W(p^{(\infty)})$.
    This is because $\|B(p^{(m)})\| < 2\|B(p^{(\infty)})\|$ holds for sufficiently large $m$, and it holds
    \begin{equation}
        \|\exp(tA)B(p^{(m)})B(p^{(m)})^*\exp(tA)^*\|\leq 4\|B(p^{(\infty)})\|^2M^2\exp(-2\omega t).
    \end{equation}
    Therefore, applying Lebesgue convergence theorem, it can be shown that 
    \begin{equation}\label{eq:W_m_converges_to_W_infty_with_respect_to_operator_norm}
        \begin{split}
            W(p^{(m)}) &= \int_0^\infty \exp(tA)B(p^{(m)})B(p^{(m)})^*\exp(tA)^* \mathrm{d}t
            \\
            &\xrightarrow[m\to\infty]{}
            \int_0^\infty \exp(tA)B(p^{(\infty)})B(p^{(\infty)})^*\exp(tA)^* \mathrm{d}t
            = W(p^{(\infty)}).
        \end{split}
    \end{equation}
    According to \cite[Lemma 1]{Yamamoto1968}, 
    $|\mu_k(W(p^{(m)})) - \mu_k(W(p^{(\infty)}))|\leq \|W(p^{(m)}) - W(p^{(\infty)})\|$, and equation \eqref{eq:W_m_converges_to_W_infty_with_respect_to_operator_norm} implies 
    $\mu_k(W(p^{(m)}))$ converges to $\mu_k(W(p^{(\infty)}))$ as $m\to\infty$.
    This implies that $\mu_k(W(\cdot))$ is continuous.
    Therefore, $f$ and $g$ are also continuous on $\mathcal{F}$.
    
    Then, we show the existence of an optimal solution.
    Let $\mathcal{F}_0$ be 
    \begin{equation}
        \mathcal{F}_0 = \{ p\in\mathcal{F}\mid h(p)\leq h(p_0)\},
    \end{equation}
    where $h$ represents $f$ or $g$ of Problem \eqref{problem:VCS_infinite} or \eqref{problem:AECS_infinite}.
    Since $h$ is continuous, it is sufficient to show $\mathcal{F}_0$ is compact.
    Moreover, since $\mathcal{F}_0 \subset \mathcal{F} = S\cap X \subset S$ and $S$ is compact, it is sufficient to show $\mathcal{F}_0$ is closed with respect to $\ell^1$ norm.
    Let $\{p^{(m)}\}_{m=1}^\infty\subset \mathcal{F}_0$ converges to $p^{(\infty)}$ with respect to $\ell^1$ norm.
    Then, if $p^{(\infty)}\notin X$, $\mu_n(W(p^{(m)}))$ converges to $0$.
    This implies $h(p^{(m)})>h(p_0)$ for sufficiently large $m$.
    This is contradiction.
    Thus, $p^{(\infty)}\in X$.
    Then, we can define $h(p^{(\infty)})$, and
    \begin{equation}
        h(p^{(\infty)}) = h(\lim_{m\to\infty}p^{(m)}) = \lim_{m\to\infty}h(p^{(m)}) \leq h(p_0),
    \end{equation}
    because $h$ is continuous.
    This implies $p^{(\infty)}\in\mathcal{F}_0$ and $\mathcal{F}_0$ is closed.
    Therefore, by compactness of $\mathcal{F}_0$ and continuity of $h$, an optimal solution exists.
\end{proof}

\end{document}